\DeclareMathAlphabet{\mathpzc}{OT1}{pzc}{m}{it}
\def\setliststart#1{\setcounter{\@listctr}{#1}%
  \addtocounter{\@listctr}{-1}}
\def\R{\mathbb R}
\DeclareMathOperator*{\eps}{\varepsilon}
\newtheorem{theorem}{{\bf Theorem}}[section]
\newtheorem{lemma}[theorem]{Lemma}
\newtheorem{proposition}[theorem]{Proposition}
\newtheorem{assumption}[theorem]{Assumption}
\numberwithin{equation}{section}
\title[Periodic limit for non-autonomous Lagrangian]{Periodic limit for non-autonomous Lagrangian systems and applications to a Kuramoto type model}
\author{Veronica Danesi \and Cristian Mendico \and Xuan Tao \and Kaizhi Wang}
\address{Dipartimento di matematica, Universit\'a degli studi di Roma Tor Vergata -- Via della Ricerca Scientifica 1, 00133 Roma}
\email{danesi@mat.uniroma2.it}
\address{Institut de Math\'ematique de Bourgogne, UMR 5584 CNRS, Universit\'e Bourgogne Europe, 21000 Dijon, France}
\email{cristian.mendico@u-bourgogne.fr}
\address{Huatai Securities Company Limited -- Nanjing 210019, China}
\email{taoxuan@htsc.com} 
\address{School of Mathematical Sciences, Shanghai Jiao Tong University -- Shanghai 200240, China}
\email{kzwang@sjtu.edu.cn}
\date{\today}
\subjclass[2020]{35Q93 - 37J51 - 37J65 - 41A25 - 70H20 - 92B25}
\keywords{Weak KAM theory; Time-periodic Hamilton-Jacobi equations; Rate of convergence; Kuramoto type model}
\thanks{{\it Acknowledgements:} Veronica Danesi was partially supported by the MUR Excellence Department Project awarded to the Department of Mathematics, University of Rome Tor Vergata, CUP E83C23000330006, by the PRIN 2022 PNRR-Project P20225SP98 \textquotedblleft Some mathematical approaches to climate change and its impacts\textquotedblright (funded by the European Community-Next Generation EU, CUP E53D2301791 0001) and by the PRIN Project 2022FPZEES \textquotedblleft Stability in Hamiltonian Dynamics and
Beyond\textquotedblright. The second author wishes to thank the School of Mathematical Sciences for the hospitality at Shanghai Jiao Tong University (China) during which this paper was finished. Kaizhi Wang is partially supported by National Natural Science Foundation of China (Grant
Nos. 12525107, 12171315).}
\begin{document}

%%%%%%%%%%%%%%%%%%%%%%%%%%%%%%%%%%%%%%%%%%%%%%%%%%%%%%%%%%%%%%%%%%

\maketitle

\begin{abstract}
This paper explores the asymptotic properties of non-autonomous Lagrangian systems, assuming that the associated Tonelli Lagrangian converges to a time-periodic function. Specifically, given a continuous initial condition, we provide a suitable construction of a Lax-Oleinik semigroup such that it converges toward a periodic solution of the equation. Moreover, the graph of its gradient converges as time tends to infinity to the graph of the gradient of the periodic limit function with respect to the Hausdorff distance. Finally, we apply this result to a Kuramoto-type model, proving the existence of an invariant torus given by the graph of the gradient of the limiting periodic solution of the Hamilton-Jacobi equation.
\end{abstract}

%\tableofcontents

\section{Introduction}

The goal of this paper is to describe the asymptotic properties of non-autonomous Lagrangian systems where the Lagrangian function converges as time tends to infinity toward a periodic Tonelli Lagrangian. This will be achieved by constructing a suitable Lax-Oleinik semigroup such that it converges uniformly to the periodic viscosity solution of the periodic limit equation for any continuous initial datum. Furthermore, the graph of its gradient also converges with respect to the Hausdorff distance.

In the classical Tonelli case for autonomous systems such results are well established. Indeed, given a closed manifold $M$, endowed with a Riemannian metric, let $L:TM\to\R$ be a Tonelli Lagrangian and we denote by $H:T^*M\to \R$ its associated Hamiltonian. The corresponding stationary Hamilton-Jacobi equation is
\begin{equation}\label{1-2}
H(x,d u)=c(L),
\end{equation}
where $c(L)$ is the Ma$\mathrm{\tilde{n}}\mathrm{\acute{e}}$ critical value of $L$ and, without loss of generality, we will from now on always assume $c(L)=0$. For each $u\in C(M,\R)$, each $t\geq0$, and each $x\in M$ we recall that the Lax-Oleinik semigroup is defined as
\begin{equation}\label{1-3}
T_t u(x)=\inf_\gamma \Big\{u(\gamma(0))+\int_0^t L(\gamma(s),\dot{\gamma}(s))ds\Big\}
\end{equation}
where the infimum is taken among the continuous and piecewise $C^1$ paths $\gamma:[0,t]\to M$ with $\gamma(t)=x$. In \cite{Fat1} it has been proved the existence of the weak KAM solutions of the stationary Hamilton-Jacobi equation (\ref{1-2}) by showing the existence of fixed points of the Lax-Oleinik semigroup. 
Furthermore,  it is known that  for any $u \in C(M, \R)$ we have that $\displaystyle{\lim_{t\to+\infty}}T_tu=\bar{u}$ exists and $\bar u$ is a weak KAM solution of (\ref{1-2}), see \cite{Fat4}. Finally,  a geometric interpretation of the convergence of the Lax-Oleinik semigroup has been provided in \cite{Arn}: the family of the adherences of the graphs of $dT_tu$ converges for the topology of Hausdorff to the adherence of the graph of $d\bar{u}$ as $t\to+\infty$.
For more on weak KAM theory we refer to \cite{Andrea_1, Sorrentino_1, Maxime_1, WL, W} and the references therein.

For the purpose of this paper, let $L_1: TM \times \R \to \R$  be a non-autonomous Tonelli Lagrangian, see Assumption \ref{TonelliAssumption} below. In this paper, we address the asymptotic behavior of a modified Lax-Oleinik semigroup
\[
\mathcal{T}_t \varphi(x) = T^1_t \varphi(x) - \inf_{x \in M} T^1_t \varphi(x), \quad (t > 0,\;\; \varphi \in C(M;\R)),
\]
%\begin{equation}\label{intro_Cauchy}
%    \begin{cases}
%    \partial_t u(t, x) + H(x, du(t, x), t) = 0, & (t, x) \in (0, \infty) \times M
%    \\
%    u(0,x ) = u_0(x), & u_0 \in C(M, \R),
%    \end{cases}
%\end{equation}
where $T^1_t\varphi$ denotes the classical Lax-Oleinik operator associated with $L_1$, under the assumption that $L_1$ converges to a 1-periodic Tonelli Lagrangian function $\overline{L_1}:TM \times \mathbf{S}^1 \to \R$, i.e., 
\begin{equation}\label{Lagrangian_convergence}
\displaystyle{\lim_{n \to \infty}} L_1(x, v, t+n) = \overline{L_1}(x, v, t) \quad \textrm{in}\;\; C^2_c(TM \times \R;\R)
\end{equation}
where $C^2_c(TM \times \R;\R)$ denotes the space of $C^2$ functions with compact support in $TM \times \R$, with exponential rate of convergence
\begin{equation}\label{rate_Lagrangian}
\| L_1(t + n, x, v) - \overline{L_1}(t, x, v) \|_{C^2_c( T M\times \R;\R)} \leq C e^{-\rho n}, \mbox{ for some } C\in\mathbb{R},\;\rho > 0\; \mbox{ and }\; \forall\; n \in \mathbf{N}.
\end{equation} 
Moreover, we assume that the Aubry set of $\overline{L_1}$ consists of a unique hyperbolic periodic orbit.
%Observe that, if the Lagrangian $\overline{L}$ is $T$-periodic, for some $T > 0$, assumption \eqref{rate_Lagrangian} reads as follows \begin{equation*}
%\| L(t + n, x, v) - \overline{L_1}(t, x, v) \|_{C^2_c( T M\times \R;\R)} \leq C e^{-\rho T n}, \mbox{ for some } C\in\mathbb{R} \mbox{ and }\rho > 0.
%\end{equation*}

Finally, we denote by $H_1: T^*M \times \R \to \R$ the Hamiltonian associated with $L_1$:
\[
H_1(t, x, p) = \sup_{v \in T_x M} \{\langle p, v \rangle - L_1(t, x, v) \}
\]
and similarly for $\overline{H_1}$.

\medskip

The main result of this paper is the following. 

\begin{theorem}\label{thm_2_intro}
%Let $z: [0, \infty) \times M \to \R$ be a solutions of \eqref{tilde}. Then, the following hold.
Let $\varphi \in C(M;\R)$. Then, the following hold.
\begin{itemize}
\item[($i$)] For any $\varphi \in C(M;\R)$ there exists a time periodic viscosity solution $w \in C(M \times \R; \R)$ to 
\begin{equation*}
    \partial_t w(x, t) + \overline{H_1}(t, x, d_x w(x,t)) = 0
\end{equation*}
such that 
\[
\displaystyle{\lim_{n \to \infty}} \| \mathcal{T}_{t+n} \varphi(\cdot) -  w(\cdot, [ t ])\|_{\infty} = 0
\]
for all $t \in \R$, where $[t] = t\; \mbox{mod } 1$. Moreover, 
\begin{equation*}
w(x, [ t ]) = \overline{\varphi}(x, [ t ]) - \inf_{x \in M} \overline{\varphi}(x, [ t ])
\end{equation*}
where
\begin{equation*}
    \bar{\varphi}(x,[t])=\inf_{y\in M}\big(\varphi(y)+h_{0,[t]}(y,x)\big),
\end{equation*} 
where $h_{0,[t]}(y,x)$ denotes the extended Peierls barrier defined below in \eqref{Ext_Peierls},  and for each $\varphi \in C(M; \R)$ we have 
\begin{equation}\label{exp_1}
\| \mathcal{T}_{t+n} \varphi(\cdot) - w(\cdot, [ t ]) \|_{\infty} \leq C e^{-\rho n}, \;\; \forall\; n \in \mathbf{N}, \; \forall\; t \in \R \mbox{ and for some } C\in\mathbb{R}
\end{equation}
where $\rho$ is given in \eqref{rate_Lagrangian}.
\item[($ii$)]  For any $\varphi \in C(M;\R)$ we have $\displaystyle{\lim_{n \to \infty}} d_{H} (\overline G_n(d\mathcal{T}\varphi), \overline G(dw)) = 0$ where 
\begin{equation*}
G_n(d\mathcal{T}\varphi):=\Big\{\big(x,[t],d_x \mathcal{T}_{n+[t]} \varphi(x),d_{t}\mathcal{T}_{n+[t]} \varphi(x)\big): (x, n+ [t]) \in \mbox{Dom}(d\mathcal{T}\varphi) \Big\}
\end{equation*}
and $G(dw)$ is the adherence of the limit function $w$. 
\end{itemize} 
\end{theorem}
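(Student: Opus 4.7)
The plan is to exploit the exponential convergence \eqref{rate_Lagrangian} to reduce the long-time behaviour of the non-autonomous semigroup $\mathcal T_t\varphi$ to that of the periodic Lax-Oleinik semigroup $\overline T_{s,t}$ associated with $\overline{L_1}$, whose asymptotics are controlled by the standing hypothesis that the Aubry set of $\overline{L_1}$ is a unique hyperbolic periodic orbit. The normalization $\mathcal T_t\varphi = T^1_t\varphi - \inf_x T^1_t\varphi$ is essential throughout because it quotients out the additive drift of the Lagrangian action that would otherwise obstruct convergence.

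For part (i), I would first establish that the periodic time-one map $\overline T_{0,1}$, acting on $C(M;\R)$ modulo constants, is an exponential contraction toward the unique normalized weak KAM solution of the limit equation. This is classical in the hyperbolic Aubry regime via the Mather graph theorem together with exponential contraction along stable leaves of the periodic orbit, and yields a contraction rate $\sigma>0$ related to the hyperbolicity exponent. Next, I would decompose
\[
T^1_{0,n+t}\varphi \;=\; T^1_{n,n+t}\circ T^1_{n-1,n}\circ\cdots\circ T^1_{0,1}\varphi,
\]
and compare each factor $T^1_{k,k+1}$ with $\overline T_{0,1}$. Using uniform Tonelli bounds on minimizing velocities together with \eqref{rate_Lagrangian}, a curve-by-curve action comparison gives $\|T^1_{k,k+1}\psi - \overline T_{0,1}\psi\|_\infty = O(e^{-\rho k})$ uniformly on equi-Lipschitz families $\{\psi\}$. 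Combining this with the exponential contraction of the first step via a geometric-sum argument yields the $C^0$ rate \eqref{exp_1}. To identify the limit with $\bar\varphi(x,[t])-\inf_x\bar\varphi(x,[t])$, I would pass to the limit inside the variational formula for $T^1$ and recognise the asymptotic action as the extended Peierls barrier of \eqref{Ext_Peierls}.

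For part (ii), the plan is to combine the $C^0$ rate from (i) with the uniform semiconcavity and Lipschitz constant of $\mathcal T_t\varphi$, both standard consequences of Tonelli regularity and independent of $t$ on intervals of length at least one. Uniform convergence plus equi-semiconcavity force the closure of the graph of $d\mathcal T_{n+t}\varphi$ to lie eventually inside every neighbourhood of $\overline G(dw)$, yielding one Hausdorff inclusion. The reverse inclusion is obtained by producing, for each point of differentiability of $w$, calibrated curves for $w$ that can be shadowed by minimizers of $\mathcal T_{n+t}\varphi$, exploiting the exponential attraction toward the hyperbolic periodic orbit of $\overline{L_1}$.

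The most delicate step I anticipate is the quantitative compatibility of rates in part (i): to get the specific exponent $\rho$ in \eqref{exp_1}, one has to verify that the contraction rate $\sigma$ of the periodic semigroup dominates $\rho$ (otherwise the effective rate degrades to $\min(\sigma,\rho)$) and that the iterated comparison does not amplify the error through uncontrolled Lipschitz constants. This demands careful propagation of regularity under the non-autonomous semigroup, typically handled either by a Lasry--Lions regularization or by linearizing around the hyperbolic orbit and using its stable/unstable foliation to absorb the perturbation.
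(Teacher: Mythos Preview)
Your outline is viable but follows a different route from the paper in both parts.

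For (i), rather than iterating one-step comparisons $T^1_{k,k+1}$ versus $\overline T_{0,1}$ and summing a geometric series, the paper performs a single two-block split: it shows (Proposition~\ref{prop}(4)) that
\[
\bigl|\mathcal T_{t+t'}\varphi - \bigl(\overline T_{t'}\circ\mathcal T_t\varphi - \textstyle\inf_x \overline T_{t'}\circ\mathcal T_t\varphi\bigr)\bigr|\le e^{-\rho t'},
\]
so the non-autonomous evolution on $[0,t+t']$ is, up to $e^{-\rho t'}$, the normalized periodic evolution applied to $\mathcal T_t\varphi$. It then invokes Ascoli--Arzel\`a on $\{\mathcal T_{t_n}\varphi\}$ together with the already-known convergence of the periodic (new) Lax--Oleinik operator from \cite{WY1,W1}, rather than contracting step by step. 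This avoids the iterated-error bookkeeping you identify; your worry that the effective exponent is a priori $\min(\sigma,\rho)$ is legitimate, and in fact the paper's own derivation of the precise rate $\rho$ in \eqref{exp_1} is also terse on this point.

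For (ii), instead of arguing directly from uniform semiconcavity and calibration as you propose, the paper factors through the periodic problem: it first shows (Proposition~\ref{M_convergence}) that the sets of minimizing initial velocities for $L_1$ and $\overline{L_1}$ are Hausdorff-close, whence $\overline G(d\mathcal T_{n+t}\varphi)$ is close to $\overline G(d\overline T_{n+t}\varphi)$; it then applies the separately proved periodic adherence result Theorem~\ref{thm_1} to pass from $\overline G(d\overline T_{n+t}\varphi)$ to $\overline G(dw)$, and concludes by the triangle inequality. Your direct semiconcavity-plus-shadowing argument in the spirit of \cite{Arn} is a reasonable alternative; the paper's reduction trades that for modularity, at the cost of the auxiliary Theorem~\ref{thm_1}.
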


%In particular, we show that $\mathcal{T}_t \varphi(x)$ converges uniformly to a $1-$periodic viscosity solution $\overline{u}: \mathbf{S}^1 \times M \to \R$ to 
%\begin{equation*}
%    \partial_t u(t, x) + \overline{H}(x, du(t, x), t)= 0
%\end{equation*}
%and, moreover, 
%\begin{equation}\label{intro_Hausdorff}
%    \lim_{n \to \infty} d_H\left(\overline{\textrm{graph}(d\mathcal{T}_{n+t} \varphi(x))}, \overline{\textrm{graph}(d\overline{u}(t, \cdot))} \right) = 0 
%\end{equation}
%where $d_H$ denotes the Hausdorff distance.

%In order to achieve such a result we need first to show that \eqref{intro_Hausdorff} holds for time-periodic Hamiltonian systems (see \Cref{thm_1} below).

Heuristically, the idea of the proof of such a result is to connect the function $\mathcal{T}_{t+n} \varphi$,associated with the non-periodic Lagrangian, to the new Lax-Oleinik (see \cite{WY1,WY2}) associated with the periodic Lagrangian. For more on weak KAM theory for time-periodic Hamiltonian systems we refer to \cite{CIS,FM,WY1,WY2} and to \Cref{weakKAM} below. 

We observe that for a time-periodic Lagrangian the convergence in \eqref{exp_1} was proved in \cite{W1}. More precisely, given $L_2: TM \times \mathbf{S}^{1} \to \R$, and $H_2: T^* M \times \mathbf{S}^{1} \to \R$ the corresponding Hamiltonian, the author showed that the viscosity solution of the Cauchy problem 
\begin{equation*}
    \begin{cases}
     \partial_t U(t, x) + H_2(t, x, d_xU(t, x)) = 0, & (t, x) \in (0, \infty) \times M
     \\
     U(0, x) = u_0(x), & u_0 \in C(M;\R)
    \end{cases}
\end{equation*}
converges to a periodic viscosity solution of 
\begin{equation*}
    \partial_t U(t, x) + H_2(t, x, d_xU(t, x)) = 0
\end{equation*}
with exponential rate of convergence under the assumption that the Aubry set is given by a unique hyperbolic periodic orbit. However, in this manuscript the Lagrangian $L$ is not periodic as stated in assumption \eqref{Lagrangian_convergence}. Furthermore, we observe that the exponential rate of convergence in \eqref{exp_1} is the most natural since, due to the hyperbolicity of the Aubry set, the minimizers of the initial non-periodic problem are attracted with exponential rate to the minimizers of the periodic limit problem.

\medskip

In conclusion, we consider the specific case of a Kuramoto type system, i.e., $N$ coupled oscillators described by the following equations
\begin{equation*}
\ddot{\theta}_i=\Omega_i+\sum_{j=1}^N a_{ij}(t)\sin(\theta_j-\theta_i)\ ,
\end{equation*}
where $\theta_i$ and $\Omega_i\in\mathbb{R}$ are the phase and natural frequency of i-th oscillator, respectively, and the co-efficients $a_{ij}$ represent the coupling between the j-th oscillator and the i-th oscillator. 
As a consequence of our main result \Cref{thm_2_intro} we get the existence of an invariant torus supported on the graph of the gradient of the limit time-periodic function (see \Cref{thm_3}). 
The Kuramoto model, in both its first and second-order forms, has found wide-ranging applications in physics, biology, neuroscience, and engineering. 
It has been used to study several synchronization phenomena in neuronal behavior, cardiac pacemaker cells, and the collective dynamics of power systems (see, for instance, \cite{Choi-19,Cumin-07, Strogatz-00, Witthaut-14} and references therein).
However, in many of these real-world systems, the coupling strength is not constant but rather varies in time due to external influences or internal adaptive processes. 
For example, neural connectivity can fluctuate and electrical loads in power grids vary over time. 
In \cite{Cumin-07}, time-varying coupling strengths and natural frequencies have been taken into account to provide more realistic pictures of neuronal synchronization in the brain. 
Similarly, other studies have included time-varying parameters, delayed couplings, or periodically forced versions of Kuramoto models (see, for instance, \cite{Leander-15,Lu-18, Metivier-20, Petroski-12, Wright-21} and references therein).
Since the inclusion of time-dependent coupling arises naturally in this kind of models (see also \cite{Niu-Wang-Li}), in this work we apply our weak KAM result to a modified second-order Kuramoto model with time-dependent coupling.

\subsection*{Organization of the paper}
\Cref{weakKAM} is dedicated to the review of the main definitions and results on weak KAM theory for autonomous and non-autonomous Lagrangian systems. \Cref{sec_thm1} and \Cref{sec_thm2} are devoted to the proof of the main results through several preliminary results having their own interests. Finally, in \Cref{Kuramoto} we address the application to a Kuramoto type model.

\section{On weak KAM theory}\label{weakKAM}

Hereafter, $M$ denotes a compact and connected smooth manifold without boundary endowed with a Riemannian metric,
and $TM$ and $T^*M$ are its tangent and cotangent bundles. 

\begin{assumption}\label{TonelliAssumption}
Let $L: TM\times\R\to\R$, $(x,v,t)\mapsto L(x,v,t)$ be of class $C^\infty$ and satisfy
\begin{itemize}
  \item[($i$)] {\bf convexity}: for all $x\in M$ and $t\in \R$, the Hessian matrix $\big(\partial^2 L/\partial v_i\partial v_j\big)(x,v,t)$ (calculated with respect to linear coordinates on $T_xM$) is positive definite;
  \item[($ii$)] {\bf superlinearity}: $\displaystyle{\lim_{\|v\|_x\to+\infty}}\frac{L(x,v,t)}{\|v\|_x}=+\infty$ uniformly on $x\in M$, $t\in\R$;
  \item[($iii$)] {\bf completeness}: all the maximal solutions of the Euler-Lagrange equation of $L$ are defined on $\R$.
\end{itemize}
Such a Lagrangian function will be called a {\bf Tonelli Lagrangian function}. 
\end{assumption}

We can associate with $L$ a Hamiltonian, as a function on
$T^*M\times\R$: 
\[
H(x,p,t)=\sup_{v\in T_xM}\{\langle p,v\rangle_x-L(x,v,t)\},
\]
where $\langle\cdot,\cdot\rangle_x$ represents the canonical pairing between the tangent and cotangent space. The corresponding evolutionary Hamilton-Jacobi
equation is

\begin{equation}\label{1-1}
d_tw+H(x,d_xw,t)=c(L),
\end{equation}
where $c(L)$ is the Ma$\mathrm{\tilde{n}}\mathrm{\acute{e}}$ critical value  of $L$ \cite{Man97}.

We have to recall the fundamental constructions of the weak KAM theory before we can state our
main result.  See \cite{Arn,Fat1,Fat2,Fat3,Fat4,Fat-b} and \cite{CIS,FM,WY1,WY2} for more details.

%\subsection{Weak KAM theory for autonomous Lagrangians}

\subsection{Weak KAM theory for time-periodic Lagrangians}

 In this section we introduce the notation used in the sequel and review some definitions and results of the weak KAM theory.

Let $A$ be a  subset of a metric space $(X,d)$. For $\varepsilon>0$, the ball of radius $\varepsilon>0$ around $A$ in $X$ is denoted by

\[
A^\varepsilon:=\{x\in X: d(x,A)\leq \varepsilon\}.
\]
We view $\mathbf{S}^1$ as a fundamental domain in $\R$, i.e., $[0,1]$ with the two endpoints identified. The standard universal covering projection $\pi:\R\to\mathbf{S}^1$ takes the form $\pi(t)=[t]$, where $[t]=t$ mod 1, denotes the fractional part of $t$, i.e., $t=[t]+\{t\}$, where $\{t\}$ is the greatest integer not greater than $t$.

Given a Tonelli Lagrangian $L: TM \times \R \to \R$ as in \eqref{TonelliAssumption}, the Euler-Lagrange equation generates a flow of diffeomorphisms $\phi^L_t:TM\times\mathbf{S}^1\to TM\times\mathbf{S}^1$, $t\in\R$, defined by

\[
\phi^L_t(x_0,v_0,t_0)=(x(t+t_0),\dot{x}(t+t_0),[ t+t_0]),
\]
where $x:\R\to M$ is the maximal solution of the
Euler-Lagrange equation with initial conditions $x(t_0)=x_0$,
$\dot{x}(t_0)=v_0$. The completeness and periodicity conditions
grant that this correctly defines a flow on $TM\times\mathbf{S}^1$.

For each $t\geq 0$ and each $u\in C(M,\R)$, let

\begin{equation*}
T_tu(x):=\inf_\gamma\Big\{u(\gamma(0))+\int_0^tL(\gamma,\dot{\gamma},s)ds\Big\}
\end{equation*}
for all $x\in M$, where the infimum is taken among the continuous and piecewise $C^1$ paths $\gamma:[0,t]\to M$ with $\gamma(t)=x$.
For each $t\geq 0$, $T_t$ is an operator from $C(M,\R)$ to itself. Since $L$ is time-periodic, then
$\{T_n\}_{n\in\mathbf{N}}$ is a one-parameter semigroup of operators, called the Lax-Oleinik semigroup associated with $L$, where
$\mathbf{N}=\{0,1,2,\dots\}$.  In \cite{Fat4} Fathi raised the question as to whether the convergence result of the Lax-Oleinik semigroup holds in the time-periodic case. This would be the convergence of $T_nu$, for all $u\in C(M,\R)$, as $n\to+\infty$, $n\in\mathbf{N}$. Later Fathi and Mather \cite{FM} provided examples with $M=\mathbf{S}^1$ where there is no such convergence, thus answering the above question negatively.

 Wang and Yan \cite{WY1}  introduced a suitable notion of Lax-Oleinik type operators associated with $L$ that reads as: for each $\tau\in[0,1]$, each $n\in\mathbf{N}$ and each $u\in C(M,\R)$, let

\[
\widetilde{T}_n^\tau u(x)= \inf_{\substack{k\in\mathbf{N} \\ n\leq k\leq
2n}}\inf_{\gamma}\Big\{u(\gamma(0))+\int_{0}^{\tau+k}
L(\gamma,\dot{\gamma},s)ds\Big\}
\]
for all $x\in M$, where the second infimum is taken among the continuous and piecewise $C^1$ paths $\gamma:[0,\tau+k]\rightarrow
M$ with $\gamma(\tau+k)=x$.
They also proved the convergence of the family of the new operators. 
%Now recall the definition of the new operator. 
 For each $\tau\in[0,1]$ and each $n\in \mathbf{N}$, $\widetilde{T}_n^\tau$ is an operator from $C(M,\R)$ to
itself and we call $\widetilde{T}_n^\tau$ the new Lax-Oleinik operator associated with $L$. Next, for each $n\in\mathbf{N}$ and each $u\in C(M,\R)$, let
\[
U^u_n(x,\tau)=\widetilde{T}_n^\tau u(x)
\]
for all $(x,\tau)\in M\times[0,1]$. In \cite{WY1} the authors proved the following results:
for each $u\in C(M,\R)$, the uniform limit 
\[
\lim_{n\to+\infty}U^u_n=\bar{u}
\]
exists, $\bar{u}$ is a weak KAM solution of the evolutionary  Hamilton-Jacobi equation (\ref{1-1}), and moreover 
\begin{equation}\label{u_bar}
\bar{u}(x,[\tau])=\inf_{y\in M}\big(u(y)+h_{0,[\tau]}(y,x)\big)
\end{equation}
for all $(x,\tau)\in M\times[0,1]$, where $h:\mathbf{S}^1\times\mathbf{S}^1\times M\times M\to\R$ denotes the extended Peierls barrier \cite{Mat93}, $[\tau]=\tau$ mod 1 and $\mathbf{S}^1=\R/\mathbf{Z}$. 

Finally, let $w\in C(M\times\mathbf{S}^1,\R)$. Then $w$ is a weak KAM solution of (\ref{1-1}) if and only if it satisfies
\[
\widetilde{T}^{\tau}_nw(x,0)=w(x,[\tau])
\] 
for all $n\in\mathbf{N}$ and for all $ (x,\tau)\in M\times[0,1]$. We recall that in the time-periodic case, weak KAM solutions and 1-periodic viscosity solutions are  the same.

\subsubsection{More on the Lax-Oleinik semigroup}
Under the assumptions \eqref{TonelliAssumption} on $L$, the Cauchy Problem for (\ref{1-1}) is well posed in the viscosity
sense: given a continuous function $u:M\to\R$, (\ref{1-1}) admits a unique continuous viscosity solution $U:M\times[0,+\infty)\to\R$ defined by $U(x,t)=T_tu(x)$ which is locally Lipschitz on $M\times(0,+\infty)$. 
%$w(x,t):M\times[0,+\infty)\to\R$, such that $w(\cdot,0)=u$. It is well known that for each $u\in C(M,\R)$, the function  is continuous on $M\times[0,+\infty)$ and . Furthermore, $U(x,t)$ is the unique viscosity solution of the Cauchy Problem for (\ref{1-1}).

For each $n\in\mathbf{N}$ and each $u\in C(M,\R)$, let

\[
\widetilde{T}_nu(x)=\inf_{\substack{k\in\mathbf{N} \\ n\leq k\leq
2n}}\inf_{\gamma}\Big\{u(\gamma(0))+\int_{0}^k
L(\gamma,\dot{\gamma},\sigma)d\sigma\Big\}
\]
for all $x\in M$, where the second infimum is taken among the continuous and piecewise $C^1$ paths $\gamma:[0,k]\rightarrow M$
with $\gamma(k)=x$. One can easily check that for each $n\in\mathbf{N}$, $\widetilde{T}_n$ is an operator from
$C(M,\R)$ to itself, and that $\{\widetilde{T}_n\}_{n\in\mathbf{N}}$ is a semigroup of operators.
We have, by definition, for each $\tau\in[0,1]$, each $n\in\mathbf{N}$, each $u\in C(M,\R)$ and each $x\in M$,

\[
\widetilde{T}^{\tau}_nu(x)=T_{\tau}\circ\widetilde{T}_nu(x)=\inf_{\substack{k\in\mathbf{N} \\ n\leq k\leq
2n}}T_{\tau+k}u(x).
\]

Given $n\in\mathbf{N}$ and $u\in C(M,\R)$, let 
\begin{equation}\label{V_def}
V^u_n(x,t):=T_t\circ\widetilde{T}_nu(x)
\end{equation}
for all $(x,t)\in M\times[0,+\infty)$. In view of the fact just mentioned in subsection 2.1, $V^u_n(x,t)$ is the unique viscosity solution of the equation (\ref{1-1}) with $V^u_n(x,0)=\widetilde{T}_nu(x)$, and thus satisfies (\ref{1-1}) at any point of differentiability. Obviously, $U^u_n=V^u_n{_{|_{M\times[0,1]}}}$. According to the convergence result of $\widetilde{T}^{\tau}_n$, we have

\begin{equation}\label{2-1}
\lim_{n\to+\infty}V_n^u(x,t)=\bar{u}(x,[t])
\end{equation}
uniformly on $(x,t)\in M\times[0,T]$ for all $T>0$.

For each $n\in\mathbf{N}$ and each $u\in C(M,\R)$, by definition, it is easy to see that

\[
\widetilde{T}_n u(x)= \inf_{\substack{k\in\mathbf{N} \\ n\leq k\leq
2n}}\inf_{\gamma}\Big\{u(\gamma(-k))+\int_{-k}^0
L(\gamma,\dot{\gamma},\sigma)d\sigma\Big\}
\]
for all $x\in M$, where the second infimum is taken among the continuous and piecewise $C^1$ paths $\gamma:[-k,0]\rightarrow
M$ with $\gamma(0)=x$. Therefore, for each $\tau\in[0,1]$, each $n\in\mathbf{N}$ and each $u\in C(M,\R)$, we have

\begin{equation}\label{2-2}
\widetilde{T}^\tau_n u(x)= \inf_{k\in\mathbf{N} \atop n\leq k\leq
2n}\inf_{\gamma}\Big\{u(\gamma(-k))+\int_{-k}^\tau
L(\gamma,\dot{\gamma},\sigma)d\sigma\Big\}
\end{equation}
for all $x\in M$, where the second infimum is taken among the continuous and piecewise $C^1$ paths $\gamma:[-k,\tau]\rightarrow
M$ with $\gamma(\tau)=x$. Thus, (\ref{2-2}) can be used as an equivalent definition of the new Lax-Oleinik operator associated with $L$.

Finally, given $a\in\R$, for each $\tau\in[0,1]$, each $n\in\mathbf{N}$ with $n\geq|\{a\}|$, let us then define the operator $\widetilde{T}^{\tau,a}_n:C(M,\R)\to C(M,\R)$ by
\begin{equation*}
\widetilde{T}^{\tau,a}_n u(x)= \inf_{k\in\mathbf{N} \atop n+\{a\}\leq k\leq
2n+\{a\}}\inf_{\gamma}\Big\{u(\gamma(-k))+\int_{-k}^\tau
L(\gamma,\dot{\gamma},\sigma)d\sigma\Big\}
\end{equation*}
for all $x\in M$, where the second infimum is taken among the continuous and piecewise $C^1$ paths $\gamma:[-k,\tau]\rightarrow
M$ with $\gamma(\tau)=x$. Let $U_n^{u,a}(x,\tau)=\widetilde{T}^{\tau,a}_n u(x)$. Then we have
\begin{equation}\label{2-3}
\lim_{n\to+\infty}U_n^{u,a}(x,\tau)=\bar{u}(x,[\tau])
\end{equation}
uniformly on $(x,\tau)\in M\times[0,1]$. The proof of (\ref{2-3}) is similar to that of Theorem 1.2 in \cite{WY1} and so it is omitted.

\subsubsection{Weak KAM solutions}
A function $w:M\times\mathbf{S}^1\to\R$ is called a subsolution of (\ref{1-1}) if it is Lipschitz and satisfies the inequality $d_tw+H(x,d_xw,t)\leq 0$ at almost every point. This definition is equivalent to the notion of viscosity subsolutions, see \cite{Fat-b}. A function $w:M\times\mathbf{S}^1\to\R$ is called a weak KAM solution of (\ref{1-1}) if $w$ is a subsolution of (\ref{1-1}) and if, for every $(x,[t])\in M\times\mathbf{S}^1$ there exists a curve $\gamma:(-\infty,[t]]\to M$ with $\gamma([t])=x$ such that

\begin{equation}\label{calibrated}
w(x,[t])-w(\gamma(t'),[t'])=\int_{t'}^{[t]}L(\gamma,\dot{\gamma},\sigma)d\sigma,\quad
\forall t'\in(-\infty,[t]].
\end{equation}
Such a curve is called a $(w,L,0)$-calibrated curve associated with $(x,[t])$.

Let $w$ be a weak KAM solution. Then it satisfies (\ref{1-1}) at any point of differentiability.
Given $(x,[t])\in M\times\mathbf{S}^1$, $w$ is differentiable at $(x,[t])$ if and only if there is a unique $(w,L,0)$-calibrated curve associated with $(x,[t])$. If $\gamma$ is a $(w,L,0)$-calibrated curve associated with $(x,[t])$, then $w$ is differentiable at $(\gamma(t'),[t'])$ and $d_xw(\gamma(t'),[t'])=\frac{\partial L}{\partial v}(\gamma(t'),\dot{\gamma}(t'),t')$ for all $t'<[t]$.

\subsubsection{Remarks on \cite{Bernard1} and \cite{Bernard2}}

A similar Lax-Oleinik operator has been defined and studied by P. Bernard in \cite{Bernard1, Bernard2} and, here, we recall such a construction underlying the common points with the new Lax-Oleinik operator in the study we are interested in.

Define the action functional associated with a $1$-periodic in time Tonelli Lagrangian function 
\[
\ell: TM \times \R \to \R
\]
as 
\begin{equation*}
F^{\ell}: [0, \infty) \times [0, \infty) \times M \times M \to \R
\end{equation*}
where
\begin{equation*}
F^{\ell}_{t, t'}(x, y) = \inf_{\gamma} \int_{t}^{t'} \ell(\gamma(s), \dot\gamma(s), s)\;ds
\end{equation*}
where the infimum is taken over the set of all continuous and piecewise $C^1$ curves $\gamma: [t, t'] \to M$ such that $\gamma(t) = x$ and $\gamma(t')=y$.

We stress that if the Aubry set associated with $\ell$ contains only one hyperbolic periodic orbit, then $\ell$ is regular, i.e., the infimum of the action of all closed curve is 0. So, we can study the asymptotic behavior of solutions to the corresponding Hamilton-Jacobi equation either with the new Lax-Oleinik semigroup either with the approach by P. Bernard. 

Indeed, if $\ell$ is regular then we have that the function 
\begin{equation*}
(t, t', x, x') \mapsto F^{\ell}(t, t', x, x')
\end{equation*}
is Lipschitz continuous and bounded on $\{(t, t') \in \R^2: t' \geq t +1\}$. So, given $(s, s') \in S^1 \times S^1$ we define the {\it action potential} as
\begin{equation*}
\Phi_{s, s'}(x, x') = \inf F_{t, t'}^{\ell}(x, x'), \quad \forall\; x, x' \in M \tag{\it action potential}
\end{equation*}
where the infimum is taken over all $t$, $t' \in \R^2$ such that $s = [ t ]$, $s' = [ t']$ and $t' \geq t+1$. Similarly, we define the {\it extended Peierls barrier} as
\begin{equation}\label{Ext_Peierls}
h_{s, s'}(x, x') = \liminf_{t-t' \to \infty} F_{t, t'}^{\ell}(x, x'),  \quad \forall\; x, x' \in M
\end{equation}
where the infimum is taken over all $t$, $t' \in \R^2$ such that $s = [ t]$, $s' = [ t' ]$.

In \cite{Bernard1} and \cite{Bernard2} the author proved that 
\begin{equation*}
\lim_{n \to \infty} F_{0, n+ \tau}(x, y) = h_{0, [ \tau ]} (x, y)
\end{equation*}
uniformly for $(\tau, x, y) \in [0,1] \times M \times M$ and, since 
\begin{equation*}
T_{n + \tau} \varphi(x) = \inf_{y \in M} \{\varphi(y) + F_{0, n+ \tau}(y, x)\} 
\end{equation*}
we have 
\begin{equation*}
\lim_{n \to \infty} T_{n+ \tau} \varphi(x) = \inf_{y \in M} \{\varphi(y) + h_{0, [ \tau ]}(y, x)\}. 
\end{equation*}

\section{Convergence of adherences in time-periodic case}\label{sec_thm1}

As a preliminary result, in order to prove \Cref{thm_2_intro} we need to show the adherences in the time periodic case. Thus, in this section, we fix a 1 time-periodic Lagrangian function $L: TM \times \mathbf{S}^1 \to \R$. 

We begin by introducing the following sets: for each $n\in\mathbf{N}$ and each $u\in C(M,\R)$, let

\[
G_n:=\Big\{\big(x,[\tau],d_xU_n^u(x,[\tau]),d_{\tau}U_n^u(x,[\tau])\big): (x,[\tau])\in \mbox{Dom} (dU_n^u) \Big\}
\]
with $G_n \subset T^*(M\times \mathbf{S}^1)$, and

\begin{equation}\label{limit_ade}
G:=\Big\{\big(x,[\tau],d_x\bar{u}(x,[\tau]),d_{\tau}\bar{u}(x,[\tau])\big): (x,[\tau])\in \mbox{Dom} (d\bar{u})\Big\}
\end{equation}
with $G \subset T^*(M\times \mathbf{S}^1)$. Then $\overline{G}_n$ and $\overline{G}$ are called the adherences of $G_n$ and $G$, respectively.

\begin{theorem}\label{thm_1}
For each $u\in C(M,\R)$, we have
\[
\lim_{n\to+\infty}d_H(\overline{G}_n,\overline{G})=0
\]
where $d_H$ denotes the Hausdorff metric\footnote{Let $(X,d)$ be a metric space and $\mathcal{K}(X)$ be the set of nonempty compact subsets of $X$. The Hausdorff metric $d_H$ is defined by

\[
d_H(K_1,K_2)=\max\left\{\sup_{x\in K_1}d(x,K_2),\sup_{x\in K_2}d(x,K_1)\right\}, \quad \forall K_1,\ K_2\in\mathcal{K}(X).
\]}.
\end{theorem}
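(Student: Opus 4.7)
The plan is to follow the general strategy of Arnaud's argument for autonomous systems \cite{Arn}, adapted to the time-periodic setting through the new Lax--Oleinik operator framework recalled in \Cref{weakKAM}. Since $\overline{G}_n$ and $\overline{G}$ will be shown to lie in a common compact subset of $T^*(M\times\mathbf{S}^1)$, Hausdorff convergence is equivalent to Painlev\'e--Kuratowski convergence, so I would split the argument into the two one-sided estimates. As a preparation, using the identity $U_n^u = V_n^u|_{M\times[0,1]}$ from \eqref{V_def} and the fact that $\widetilde T_n u$ is an infimum of $T_k u$ for $n\le k\le 2n$ with $n\ge 1$, standard Tonelli a priori estimates yield a common Lipschitz constant for $\{U_n^u\}_n$ and $\bar u$, together with a uniform local semi-concavity constant on $M\times[0,1]$. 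Combined with the viscosity identity $q+H(x,p,[\tau])=0$ valid on $G_n$ and $G$, this confines both $G_n$ and $G$ to a common compact set.

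For the upper bound $\sup_{z\in\overline{G}_n}d(z,\overline{G})\to 0$, I would pass to calibrated curves. Let $(x_n,[\tau_n],p_n,q_n)\in G_n$ be a converging sequence with limit $(x,[\tau],p,q)$. By the backward formulation \eqref{2-2} of the new Lax--Oleinik operator, differentiability of $U_n^u$ at $(x_n,[\tau_n])$ produces a minimizing backward curve $\gamma_n:[-k_n,[\tau_n]]\to M$ with $n\le k_n\le 2n$ such that
\[
p_n=\frac{\partial L}{\partial v}\bigl(\gamma_n([\tau_n]),\dot\gamma_n([\tau_n]),[\tau_n]\bigr),\qquad q_n=-H(x_n,p_n,[\tau_n]).
\]
Tonelli bounds give uniform velocity estimates on each compact time interval, so Arzel\`a--Ascoli combined with Euler--Lagrange smoothness extracts a subsequential limit $\gamma:(-\infty,[\tau]]\to M$. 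The uniform convergence \eqref{2-1} of $V_n^u$ to $\bar u$, together with continuity of the action, allows me to pass to the limit in the calibration identity and conclude that $\gamma$ is a $(\bar u,L,0)$-calibrated curve associated with $(x,[\tau])$. This identifies $p=\partial L/\partial v(x,\dot\gamma([\tau]),[\tau])$ and $q=-H(x,p,[\tau])$ as cluster values of differentials of $\bar u$ along $\gamma$, placing $(x,[\tau],p,q)\in\overline{G}$.

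For the lower bound $\sup_{z\in\overline{G}}d(z,\overline{G}_n)\to 0$, it suffices to approximate points of $G$ itself. Fix $(x,[\tau],p,q)\in G$, so $\bar u$ is differentiable at $(x,[\tau])$ with $D^+\bar u(x,[\tau])=\{(p,q)\}$. Because the family $\{U_n^u\}_n$ is uniformly semi-concave on $M\times[0,1]$ and converges uniformly to $\bar u$, the classical upper semi-continuity of super-differentials of semi-concave families forces $D^+U_n^u(y,s)$ to lie in an arbitrarily small neighborhood of $(p,q)$ whenever $(y,s)$ is sufficiently close to $(x,[\tau])$ and $n$ is sufficiently large. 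Since $U_n^u$, being semi-concave, is differentiable on a full-measure subset, I can pick a differentiability point $(x_n,[\tau_n])$ in each such neighborhood; the corresponding element of $G_n$ then converges to $(x,[\tau],p,q)$, which completes the proof.

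The delicate step is the upper bound: the backward minimizing curves in $\widetilde T_n^{[\tau]}$ have initial times $-k_n$ with $k_n\in[n,2n]$ free to vary, and one must ensure that the chosen $k_n$ produce Euler--Lagrange trajectories satisfying the Tonelli velocity bounds uniformly on $(-\infty,[\tau]]$, so that the subsequential limit is genuinely $(\bar u,L,0)$-calibrated rather than only satisfying an action inequality. This is where the precise convergence statement \eqref{2-3}, together with the fact that $\widetilde T_n u$ involves times bounded below by $1$, is essential in order to upgrade uniform convergence of the operators into calibration of the limit curve.
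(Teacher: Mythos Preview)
Your upper-bound argument (that cluster points of $G_n$ lie in $\overline G$) follows the same route as the paper: extract backward minimizers $\gamma_n:[-k_n,[\tau_n]]\to M$ at differentiability points of $U_n^u$, pass to a subsequential Euler--Lagrange limit $\gamma$, and show $\gamma$ is $(\bar u,L,0)$-calibrated. The step you correctly flag as delicate is where the paper does essentially all of the work: to convert the global minimality of $\gamma_n$ into a calibration identity at an intermediate time $a<[\tau_0]$, one must identify the right value function at $(\gamma_n(a),[a])$. The paper shows this is the \emph{shifted} operator $U_n^{u,a}$ of \eqref{2-3}, proving
\[
U_n^u(x_n,[\tau_n])-U_n^{u,a}(\gamma_n(a),[a])=\int_a^{[\tau_n]}L(\gamma_n,\dot\gamma_n,s)\,ds
\]
via a contradiction argument on the free endpoint $k_n$, and then uses \eqref{2-3} (not just \eqref{2-1}) to pass to the limit. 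Your sketch names the right ingredients but you should be aware that the intermediate value is \emph{not} $V_n^u(\gamma_n(a),a)$ in any obvious sense, since $a$ may be negative and the free index $k_n$ shifts with $a$.

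Your lower-bound argument is genuinely different from the paper's. You use that $U_n^u=\inf_{n\le k\le 2n}T_{\tau+k}u$ is, for $n\ge 1$, a finite infimum of value functions at times $\ge 1$ and hence uniformly semi-concave in $(x,\tau)$, and then invoke the standard upper semi-continuity of super-differentials under uniform convergence of uniformly semi-concave families to trap $D^+U_n^u$ near $d\bar u(x,[\tau])$ at differentiability points of $\bar u$. The paper never establishes or uses semi-concavity (Lemma~\ref{le1} gives only Lipschitz bounds); instead it argues by contradiction and \emph{reuses the already-proved upper bound}: once $\overline G_n\subset\overline G^{\delta/2}$, the fibre $\overline G_n|_{(\bar x,[\bar\tau])}$ is forced near $\overline G|_{(\bar x,[\bar\tau])}$, which is a singleton wherever $\bar u$ is differentiable (and is handled by sliding back along a calibrated curve to a differentiability point otherwise). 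Your route is more direct and avoids the case split and the dependence on part (i); the paper's route stays purely within the Lipschitz/dynamical framework and avoids importing the semi-concavity machinery.
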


\subsection{Preliminary lemmas}
The following two lemmas are useful in the proof of  \Cref{thm_1}.

\begin{lemma}\label{le1}
Given $u\in C(M,\R)$, let $\bar{u}=\displaystyle{\lim_{n\to+\infty}}U^u_n$. Then $\|d_x\bar{u}\|$ and $\|d_\tau\bar{u}\|$ are bounded. Moreover, $\|d_xU^u_n\|$ and $\|d_{\tau}U^u_n\|$ are bounded by a constant independent of $n\in\mathbf{N}\backslash\{0\}$.
\end{lemma}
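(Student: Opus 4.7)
The plan is to reduce both gradient bounds---on the sequence $U^u_n$ and on its uniform limit $\bar u$---to a single a priori estimate, namely that the velocities of the minimizers realizing the infimum in the new Lax-Oleinik operator are uniformly bounded, with a bound independent of $n\geq 1$ and of the starting time $-k_n$. Using the equivalent definition \eqref{2-2} of $\widetilde T^\tau_n u$ together with Tonelli's existence theorem, and the fact that the outer infimum runs over the finite range $k\in[n,2n]\cap\mathbf{N}$, one produces for every $(x,\tau)\in M\times[0,1]$ an absolutely continuous minimizer $\gamma_n\colon[-k_n,\tau]\to M$ with $\gamma_n(\tau)=x$. At any differentiability point of $U^u_n$ the standard variational/envelope identity then yields
\[
d_xU^u_n(x,\tau) = \frac{\partial L}{\partial v}\bigl(x,\dot\gamma_n(\tau),\tau\bigr), \qquad d_\tau U^u_n(x,\tau) = -H\bigl(x,d_xU^u_n(x,\tau),\tau\bigr),
\]
so bounding $dU^u_n$ uniformly in $n$ reduces to bounding the terminal velocity $\|\dot\gamma_n(\tau)\|_x$ uniformly in $n$.

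The heart of the argument, and the step I expect to be the main obstacle, is this uniform velocity estimate. Since the minimization interval has length $k_n+\tau\geq n\geq 1$, $M$ is compact, and $u$ is bounded on $M$, a classical a priori compactness result in Tonelli calculus of variations (see, e.g., \cite{Fat-b}) provides a constant $A>0$, depending only on $L$, $M$, and $\|u\|_\infty$ but independent of $n$ and of the starting time $-k_n$, such that $\|\dot\gamma_n(s)\|_{\gamma_n(s)}\leq A$ for all $s\in[-k_n,\tau]$. The underlying idea is standard: superlinearity of $L$ penalizes large average speed, while comparing $\gamma_n$ with a fixed moderate-speed curve joining its endpoints yields an $L^1$ bound on $\|\dot\gamma_n\|$, which upgrades to a pointwise bound via the Euler-Lagrange equation and the compactness of $M$.

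Combining the velocity bound with the variational identities and the continuity of $\partial_v L$ and $H$ on the compact set $\{(x,v,t):x\in M,\ \|v\|_x\leq A,\ t\in[0,1]\}$ yields a constant $K>0$ with $\|d_xU^u_n\|,\|d_\tau U^u_n\|\leq K$ at every differentiability point, uniformly in $n\in\mathbf{N}\setminus\{0\}$. Equivalently, each $U^u_n$ is $K$-Lipschitz on $M\times[0,1]$. Since $U^u_n\to\bar u$ uniformly by \eqref{2-1}, the limit $\bar u$ is itself $K$-Lipschitz, hence differentiable almost everywhere by Rademacher's theorem, with $\|d_x\bar u\|,\|d_\tau\bar u\|\leq K$ at every differentiability point. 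This settles both assertions of the lemma.
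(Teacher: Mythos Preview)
Your argument is correct. The paper's proof differs mainly in how it packages the estimates: for $\bar u$, it invokes directly that $\bar u$ is a weak KAM solution, hence Lipschitz by definition, and then reads off $d_\tau\bar u=-H(x,d_x\bar u,[\tau])$ from the Hamilton--Jacobi equation; for $U^u_n$, rather than unpacking the velocity bound on minimizers as you do, it writes $U^u_n(x,\tau)=\inf_{n\le k\le 2n}T_{\tau+k}u(x)$, observes that $|U^u_n(x,\tau)-U^u_n(y,\tau)|\le\sup_k|T_{\tau+k}u(x)-T_{\tau+k}u(y)|$, and then cites Fathi's uniform Lipschitz estimate for $T_t$ (a constant $K(1)$ valid for all $t\ge 1$, independent of $u$) to conclude, with the $d_\tau$ bound again coming from the equation. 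Your route is one level more explicit---you essentially reprove Fathi's Lipschitz estimate via the a priori compactness of minimizers (which the paper itself invokes later, citing \cite{WY1}, Lemma~3.4)---and it has the pleasant feature of treating $\bar u$ and $U^u_n$ uniformly by passing the Lipschitz constant through the uniform limit, whereas the paper handles the two cases separately.
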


\begin{proof}
Since $\bar{u}$ is a weak KAM solution, then it is Lipschitz and thus $\|d_x\bar{u}\|$ is bounded by the Lipschitz constant of $\bar{u}$. If $(x,[\tau])$ is a differentiability point of $\bar{u}$, then $d_\tau\bar{u}=-H(x,d_x\bar{u},[\tau])$. In view of the boundedness of $\|d_x\bar{u}\|$, $\|d_\tau\bar{u}\|$ is also bounded.

Note that

\begin{multline*}
|U^u_n(x,[\tau])-U^u_n(y,[\tau])|=\left|\inf_{k\in\mathbf{N} \atop n\leq k\leq 2n}T_{[\tau]+k}u(x)-\inf_{k\in\mathbf{N} \atop n\leq k\leq 2n}T_{[\tau]+k}u(y)\right|\\ \leq\sup_{k\in\mathbf{N} \atop n\leq k\leq 2n}\left|T_{[\tau]+k}u(x)-T_{[\tau]+k}u(y)\right|
\end{multline*}
for all $n\in\mathbf{N}$, $x$, $y\in M$, $\tau\in[0,1]$. From a result of Fathi \cite{Fat-b}, there exists a constant $K(1)>0$ such that $T_{[\tau]+k}u$ is Lipschitz with Lipschitz constant $\leq K(1)$, where $K(1)$ is independent of $u$, $n\in\mathbf{N}\backslash\{0\}$ and $\tau\in[0,1]$. Therefore, we have

\[
|U^u_n(x,[\tau])-U^u_n(y,[\tau])|\leq\sup_{k\in\mathbf{N} \atop n\leq k\leq 2n}|T_{[\tau]+k}u(x)-T_{[\tau]+k}u(y)|\leq K(1)d(x,y)
\]
for all $n\in\mathbf{N}\backslash\{0\}$, $x$, $y\in M$ and $\tau\in[0,1]$, which implies the boundedness of $\|d_xU^u_n\|$. Since $U^u_n$ satisfies the equation (\ref{1-1}) at any point of differentiability, then the boundedness of $\|d_xU^u_n\|$ implies the boundedness of $\|d_{\tau}U^u_n\|$.
\end{proof}

\begin{lemma}\label{le2}
Given $(x,[\tau])\in M\times\mathbf{S}^1$, let $w$ be a  weak KAM solution and let $\gamma:(-\infty,[\tau]]\to M$ with $\gamma([\tau])=x$ be a $(w,L,0)$-calibrated curve associated with $(x,[\tau])$. Set $v=\dot{\gamma}([\tau])$, $p=\frac{\partial L}{\partial v}(x,v,[\tau])$, $e=-H(x,p,[\tau])$. Then $(x,[\tau],p,e)\in \overline{G}$.
\end{lemma}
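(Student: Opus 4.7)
The plan is to exhibit $(x, [\tau], p, e)$ as a limit of points of $G$, by sliding backwards along the calibrated curve $\gamma$ and invoking the explicit differentiability statement recalled just before the lemma.

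First, I would fix any $t' < [\tau]$ and apply the quoted property of $(w, L, 0)$-calibrated curves: $w$ is differentiable at $(\gamma(t'), [t'])$ with
$$d_x w(\gamma(t'), [t']) = \frac{\partial L}{\partial v}\bigl(\gamma(t'), \dot\gamma(t'), t'\bigr).$$
Since $w$ is a weak KAM solution, it satisfies the evolutionary Hamilton-Jacobi equation at every point of differentiability, which forces
$$d_\tau w(\gamma(t'), [t']) = -H\bigl(\gamma(t'), d_x w(\gamma(t'), [t']), [t']\bigr).$$
Therefore the point
$$P(t') := \Bigl(\gamma(t'), [t'], \tfrac{\partial L}{\partial v}(\gamma(t'), \dot\gamma(t'), t'), -H(\gamma(t'), d_x w(\gamma(t'), [t']), [t'])\Bigr)$$
belongs to $G$ for every $t' < [\tau]$.

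Next, I would pick a sequence $t'_n \uparrow [\tau]$ with $t'_n < [\tau]$. Because $\gamma$ is an action-minimizing curve for a Tonelli Lagrangian, it solves the Euler-Lagrange equation and in particular $\gamma$ and $\dot\gamma$ are continuous at the endpoint $[\tau]$, so $\gamma(t'_n) \to x$ and $\dot\gamma(t'_n) \to v$. Continuity of the projection $t \mapsto [t]$ gives $[t'_n] \to [\tau]$ in $\mathbf{S}^1$. Combining these with the smoothness of $L$ and $H$ we get
$$\tfrac{\partial L}{\partial v}\bigl(\gamma(t'_n), \dot\gamma(t'_n), t'_n\bigr) \to p, \qquad -H\bigl(\gamma(t'_n), d_x w(\gamma(t'_n), [t'_n]), [t'_n]\bigr) \to e,$$
so that $P(t'_n) \to (x, [\tau], p, e)$. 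Since $P(t'_n) \in G$ for every $n$, this yields $(x, [\tau], p, e) \in \overline G$.

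The argument does not present any serious obstacle: the lemma is essentially a continuity statement for the momentum-energy lift of $w$ along its backward characteristic. The only delicate point worth mentioning is that when $[\tau] = 0$ and the sequence $t'_n$ approaches $0$ from below, the images $[t'_n]$ still converge to $[\tau] = 0$ in $\mathbf{S}^1 = \R/\mathbf{Z}$ thanks to the identification of the endpoints of $[0,1]$; the argument is unaffected. One should also note that the formula $d_x w = \partial L/\partial v$ along the calibrated curve, which is the non-trivial regularity input, is taken directly from the preceding discussion in the paper and does not need to be re-established here.
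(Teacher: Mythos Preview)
Your proof is correct and follows essentially the same approach as the paper: slide backward along the calibrated curve, use the differentiability of $w$ at $(\gamma(t'),[t'])$ for $t'<[\tau]$ together with the Hamilton--Jacobi equation to produce points of $G$, and then pass to the limit $t'\to[\tau]$ using the continuity of $\gamma$, $\dot\gamma$, $\partial L/\partial v$, and $H$. Your write-up simply adds a bit more detail (the Euler--Lagrange regularity of $\gamma$ and the $[\tau]=0$ edge case) that the paper leaves implicit.
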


\begin{proof}
For each $t<[\tau]$, since $w$ is differentiable at $(\gamma(t),[t])$ and $d_xw(\gamma(t),[t])=\frac{\partial L}{\partial v}(\gamma(t),\dot{\gamma}(t),t)$, then $d_tw(\gamma(t),[t])=-H(\gamma(t),\frac{\partial L}{\partial v}(\gamma(t),\dot{\gamma}(t),t),t)$ and

\begin{multline*}
(\gamma(t),[t],\frac{\partial L}{\partial v}(\gamma(t),\dot{\gamma}(t),t),-H(\gamma(t),\frac{\partial L}{\partial v}(\gamma(t),\dot{\gamma}(t),t),t))\\ =(\gamma(t),[t],d_xw(\gamma(t),[t]),d_tw(\gamma(t),[t]))\in G.
\end{multline*}
If we let $t\to [\tau]$, we see that

\[
(\gamma(t),[t],d_xw(\gamma(t),[t]),d_tw(\gamma(t),[t]))\to (x,[\tau],p,e),
\]
which implies that $(x,[\tau],p,e)\in \overline{G}$.
\end{proof}

\subsection{Proof of \Cref{thm_1}}

%\noindent\emph{Proof of \Cref{thm_1}}. 
Our purpose is to show that for each $\varepsilon>0$, there exists $N_0\in\mathbf{N}$ such that:
\begin{itemize}
 \item[$(i)$] $\overline{G}_n\subset \overline{G}^\varepsilon$; 
 \item[$(ii)$] $\overline{G}\subset \overline{G}_n^\varepsilon$ for all $n\geq N_0$, $n\in\mathbf{N}$.
 \end{itemize}

\vskip0.1cm

\fbox{{\bf Step 1.}} We first prove (i) by contradiction. Otherwise, there would be $\delta_1>0$ and a sequence $\{(x_n,[\tau_n])\}_n\subset M\times \mathbf{S}^1$ of differentiability points of $U^u_n$, such that

\begin{equation}\label{3-1}
(x_n,[\tau_n],d_xU^u_n(x_n,[\tau_n]),d_\tau U^u_n(x_n,[\tau_n]))\not\in \overline{G}^{\delta_1}, \quad \forall n\in\mathbf{N}.
\end{equation}
Let $p_n=d_xU^u_n(x_n,[\tau_n])$, $e_n=d_\tau U^u_n(x_n,[\tau_n])=-H(x_n,p_n,[\tau_n])$. From Lemma \ref{le1}
we conclude that $\{(x_n,[\tau_n],p_n,e_n)\}_n$ are contained in a compact subset of $T^*(M\times\mathbf{S}^1)$.
So we may assume upon passing if necessary to a subsequence that $(x_n,[\tau_n],p_n,e_n)\to(x_0,[\tau_0],p_0,e_0)$ as $n\to+\infty$. Obviously, $e_0=-H(x_0,p_0,[\tau_0])$. We assert that $(x_0,[\tau_0],p_0,e_0)\in\overline{G}$, which contradicts (\ref{3-1}). This contradiction proves (i).

Our task is now to show that $(x_0,[\tau_0],p_0,e_0)\in\overline{G}$. Let $(\gamma(s),\dot{\gamma}(s),[s])=\phi^L_{s-[\tau_0]}(x_0,v_0,[\tau_0])$, $s\in(-\infty,[\tau_0]]$, where $p_0=\frac{\partial L}{\partial v}(x_0,v_0,[\tau_0])$.
We assert that $\gamma$ is a $(\bar{u},L,0)$-calibrated curve associated with $(x_0,[\tau_0])$.
If this assertion is true, then by Lemma \ref{le2}, we deduce that $(x_0,[\tau_0],p_0,e_0)\in\overline{G}$.
Hence (i) will be proved by showing that $\gamma$ is a $(\bar{u},L,0)$-calibrated curve associated with $(x_0,[\tau_0])$, i.e.,

\begin{equation}\label{3-2}
\bar{u}(x_0,[\tau_0])-\bar{u}(\gamma(a),[a])=\int_a^{[\tau_0]}L(\gamma,\dot{\gamma},s)ds
\end{equation}
for all $a<[\tau_0]$.

For each $(x_n,[\tau_n])$, by the definition of $U^u_n$, there exist $k_n\in\mathbf{N}$ with $n\leq k_n\leq 2n$, and a minimizing extremal curve $\gamma_n:[-k_n,[\tau_n]]\to M$ with $\gamma_n([\tau_n])=x_n$ such that

\begin{equation}\label{3-3}
U^u_n(x_n,[\tau_n])=u(\gamma_n(-k_n))+\int_{-k_n}^{[\tau_n]}L(\gamma_n,\dot{\gamma}_n,s)ds.
\end{equation}
Since $(x_n,[\tau_n])$ is a differentiability point of $U_n^u$ and $\gamma_n$ satisfies (\ref{3-3}), then we have
$p_n=d_xU^u_n(x_n,[\tau_n])=\frac{\partial L}{\partial v}(\gamma_n([\tau_n]),\dot{\gamma}_n([\tau_n]),[\tau_n])$.
And thus $(\gamma_n(s),\dot{\gamma}_n(s),[s])=\phi^L_{s-[\tau_n]}(x_n,v_n,[\tau_n])$, $s\in[-k_n,[\tau_n]]$, where $p_n=\frac{\partial L}{\partial v}(x_n,v_n,[\tau_n])$.

An outline of the proof of (\ref{3-2}) is as follows. First, we show that given $a<[\tau_0]$,

\begin{equation}\label{3-4}
U^u_n(x_n,[\tau_n])-U^{u,a}_n(\gamma_n(a),[a])=\int_a^{[\tau_n]}L(\gamma_n,\dot{\gamma}_n,s)ds
\end{equation}
for $n\in\mathbf{N}$ large enough. Second, we prove the following equalities

\begin{align}
\lim_{n\to+\infty}U^u_n(x_n,[\tau_n])=&\;\bar{u}(x_0,[\tau_0]),\\
\lim_{n\to+\infty}U^{u,a}_n(\gamma_n(a),[a])=&\;\bar{u}(\gamma(a),[a]),\\
\lim_{n\to+\infty}\int_a^{[\tau_n]}L(\gamma_n,\dot{\gamma}_n,s)ds=&\;\int_a^{[\tau_0]}L(\gamma,\dot{\gamma},s)ds.
\end{align}
Finally, combining (\ref{3-4})-(3.7) gives the desired equality (\ref{3-2}).

We are now in a position to prove (\ref{3-4}). Given $a<[\tau_0]$, for $n\in\mathbf{N}$ large enough, from the definition of $U^{u,a}_n$ we have

\[
U^{u,a}_n(\gamma_n(a),[a])=\inf_{k\in\mathbf{N} \atop n+\{a\}\leq k\leq
2n+\{a\}}\inf_{\alpha}\Big\{u(\alpha(-k))+\int_{-k}^{[a]}
L(\alpha,\dot{\alpha},\sigma)d\sigma\Big\},
\]
where the second infimum is taken among the continuous and piecewise $C^1$ paths $\alpha:[-k,[a]]\rightarrow
M$ with $\alpha([a])=\gamma_n(a)$. Define a curve $\alpha_n:[-k_n-\{a\},[a]]\to M$ by $\alpha_n(\sigma)=\gamma_n(\sigma+\{a\})$. Then $\alpha_n([a])=\gamma_n(a)$ and

\begin{equation}\label{3-8}
u(\gamma_n(-k_n))+\int_{-k_n}^aL(\gamma_n,\dot{\gamma}_n,s)ds=u(\alpha_n(-k_n-\{a\}))+\int_{-k_n-\{a\}}^{[a]}
L(\alpha_n,\dot{\alpha}_n,\sigma)d\sigma.
\end{equation}
We assert that

\begin{equation}\label{3-9}
U^{u,a}_n(\gamma_n(a),[a])=u(\alpha_n(-k_n-\{a\}))+\int_{-k_n-\{a\}}^{[a]}
L(\alpha_n,\dot{\alpha}_n,\sigma)d\sigma.
\end{equation}
To prove (\ref{3-9}), we argue by contradiction. For, otherwise, there would be $h_n\in\mathbf{N}$ with
$n+\{a\}\leq h_n\leq 2n+\{a\}$, and a curve $\beta_n:[-h_n,[a]]\to M$ with $\beta_n([a])=\gamma_n(a)$ such that

\begin{equation}\label{3-10}
u(\beta_n(-h_n))+\int_{-h_n}^{[a]}L(\beta_n,\dot{\beta}_n,\sigma)d\sigma<u(\alpha_n(-k_n-\{a\}))+\int_{-k_n-\{a\}}^{[a]}
L(\alpha_n,\dot{\alpha}_n,\sigma)d\sigma.
\end{equation}
Define a curve $\bar{\gamma}_n:[-h_n+\{a\},a]\to M$ by $\bar{\gamma}_n(s)=\beta_n(s-\{a\})$. Then, from (\ref{3-10}) and (\ref{3-8}) we have

\begin{equation}\label{3-11}
u(\bar{\gamma}_n(-h_n+\{a\}))+\int_{-h_n+\{a\}}^aL(\bar{\gamma}_n,\dot{\bar{\gamma}}_n,s)ds<
u(\gamma_n(-k_n))+\int_{-k_n}^aL(\gamma_n,\dot{\gamma}_n,s)ds.
\end{equation}
Since $n+\{a\}\leq h_n\leq 2n+\{a\}$, then $n\leq h_n-\{a\}\leq 2n$. Consider the curve $\tilde{\gamma}_n:[-h_n+\{a\},[\tau_n]]\to M$ defined by

\[
\tilde{\gamma}_n(s)=\left\{\begin{array}{ll}
          \bar{\gamma}_n(s),\ & s\in[-h_n+\{a\},a],\\[2mm]
          \gamma_n(s),\ & s\in[a,[\tau_n]].
\end{array}\right.
\]
In view of (\ref{3-11}), we have

\begin{align*}
&u(\tilde{\gamma}_n(-h_n+\{a\}))+\int_{-h_n+\{a\}}^{[\tau_n]}L(\tilde{\gamma}_n,\dot{\tilde{\gamma}}_n,s)ds\\
=&\;u(\bar{\gamma}_n(-h_n+\{a\}))+\int_{-h_n+\{a\}}^aL(\bar{\gamma}_n,\dot{\bar{\gamma}}_n,s)ds+\int_a^{[\tau_n]}
L(\gamma_n,\dot{\gamma}_n,s)ds\\
<&\;u(\gamma_n(-k_n))+\int_{-k_n}^{[\tau_n]}
L(\gamma_n,\dot{\gamma}_n,s)ds,
\end{align*}
which contradicts the minimality of $\gamma_n$. This contradiction shows that (\ref{3-9}) holds.
The desired equality (\ref{3-4}) follows from (\ref{3-3}), (\ref{3-8}) and (\ref{3-9}).

Next we want to prove the equalities (3.5)-(3.7). (3.5) follows immediately from (\ref{2-1}), the Lipschitz property of $\bar{u}$ and the following inequality
\[ |U^u_n(x_n,[\tau_n])-\bar{u}(x_0,[\tau_0])|\leq|U^u_n(x_n,[\tau_n])-\bar{u}(x_n,[\tau_n])|+|\bar{u}(x_n,[\tau_n])-\bar{u}(x_0,[\tau_0])|.
\]

To prove (3.6), note that

\begin{multline}\label{3-12}
|U^{u,a}_n(\gamma_n(a),[a])-\bar{u}(\gamma(a),[a])|\leq|U^{u,a}_n(\gamma_n(a),[a])-\bar{u}(\gamma_n(a),[a])|
\\ +|\bar{u}(\gamma_n(a),[a])-\bar{u}(\gamma(a),[a])|.
\end{multline}
By (\ref{2-3}), we have $\lim_{n\to+\infty}U^{u,a}_n(\gamma_n(a),[a])=\bar{u}(\gamma(a),[a])$.
If

\begin{equation}\label{3-13}
\lim_{n\to+\infty}\bar{u}(\gamma_n(a),[a])=\bar{u}(\gamma(a),[a]),
\end{equation}
then from (\ref{3-12}), we conclude that (3.6) holds. To prove (\ref{3-13}), it is sufficient to show that

\begin{equation}\label{3-14}
d(\gamma_n(a),\gamma(a))\to 0, \quad n\to+\infty.
\end{equation}
Since $(x_n,v_n,[\tau_n])\to (x_0,v_0,[\tau_0])$ as $n\to+\infty$, then by the continuity of the solutions of the
Euler-Lagrange equation with respect to initial values, we have

\begin{equation}\label{3-15}
d(\gamma_n([\tau_n]-b),\gamma([\tau_0]-b))\to 0, \quad n\to+\infty,
\end{equation}
where $b=[\tau_0]-a$. In view of the a priori compactness given by Lemma 3.4 in \cite{WY1},
we have $(\gamma_n(s),\dot{\gamma}_n(s),[s])\in \mathcal{C}_1$, $\forall s\in[-k_n,[\tau_n]]$, $\forall n\in\mathbf{N}\backslash\{0\}$, where $\mathcal{C}_1$ is a compact subset of $TM\times\mathbf{S}^1$. Consequently, we obtain $d(\gamma_n(a),\gamma_n([\tau_n]-b))\leq A |a-[\tau_n]+b|$ for some constant $A>0$,
which implies that

\begin{equation}\label{3-16}
d(\gamma_n(a),\gamma_n([\tau_n]-b))\to 0, \quad n\to+\infty.
\end{equation}
Note that

\[
d(\gamma_n(a),\gamma(a))\leq d(\gamma_n(a),\gamma_n([\tau_n]-b))+d(\gamma_n([\tau_n]-b),\gamma([\tau_0]-b)),
\]
which together with (\ref{3-15}) and (\ref{3-16}) yields (\ref{3-14}).

In order to prove (3.7), note that

\begin{multline*}
\left|\int_a^{[\tau_n]}L(\gamma_n,\dot{\gamma}_n,s)ds-\int_a^{[\tau_0]}L(\gamma,\dot{\gamma},s)ds\right|\\
\leq \left|\int_a^{[\tau_n]}L(\gamma_n,\dot{\gamma}_n,s)ds-\int_{[\tau_n]-b}^{[\tau_n]}L(\gamma_n,\dot{\gamma}_n,s)ds\right|
+\left|\int_{[\tau_n]-b}^{[\tau_n]}L(\gamma_n,\dot{\gamma}_n,s)ds-\int_a^{[\tau_0]}L(\gamma,\dot{\gamma},s)ds\right|\\
=\left|\int_a^{[\tau_0]}L(\gamma_n(\sigma+[\tau_n]-[\tau_0]),\dot{\gamma}_n(\sigma+[\tau_n]-[\tau_0]),\sigma +[\tau_n]-[\tau_0])d\sigma-\int_a^{[\tau_0]}L(\gamma,\dot{\gamma},s)ds\right|
\\ +\left|\int_{[\tau_n]-b}^aL(\gamma_n,\dot{\gamma}_n,s)ds\right|.
\end{multline*}
Since $(x_n,v_n,[\tau_n])\to (x_0,v_0,[\tau_0])$ as $n\to+\infty$ and $(\gamma_n(s),\dot{\gamma}_n(s),[s])\in \mathcal{C}_1$, $\forall s\in[-k_n,[\tau_n]]$, $\forall n\in\mathbf{N}\backslash\{0\}$, then by the continuity of the solutions of the Euler-Lagrange equation with respect to initial values, we conclude that (3.7) holds.

\vskip0.2cm

\fbox{{\bf Step 2.}} Now we prove (ii) by contradiction. Otherwise, there would be $\delta_2>0$ and a sequence $\{(x_n,[\tau_n])\}_n\subset M\times\mathbf{S}^1$ of differentiability points of $\bar{u}$ such that

\begin{equation}\label{3-17}
\overline{G}_n\cap\{(x_n,[\tau_n],d_x\bar{u}(x_n,[\tau_n]),d_\tau\bar{u}(x_n,[\tau_n]))\}^{\delta_2}=\emptyset,\ \forall n\in\mathbf{N}.
\end{equation}
Sending $n\to+\infty$, by Lemma \ref{le1} we may assume, passing if necessary to subsequence, that

\begin{equation}\label{3-18}
(x_n,[\tau_n],d_x\bar{u}(x_n,[\tau_n]),d_\tau\bar{u}(x_n,[\tau_n]))\to(\bar{x},[\bar{\tau}],\bar{p},\bar{e})\in T^*(M\times\mathbf{S}^1).
\end{equation}
Since $\bar{u}$ and $U^u_n$ are both locally Lipschitz, then from Lemma \ref{le1}, we have

\[
\pi^*(\overline{G})=M\times\mathbf{S}^1,\quad \pi^*(\overline{G}_n)=M\times\mathbf{S}^1, \quad\forall n\in\mathbf{N}\backslash\{0\},
\]
where $\pi^*:T^*(M\times\mathbf{S}^1)\to M\times\mathbf{S}^1$ denotes the projection. From (i) there exists $N_1\in\mathbf{N}$ such that $\overline{G}_n\subset\overline{G}^{\frac{\delta_2}{2}}$ for all
$n\geq N_1$, $n\in\mathbf{N}$. Therefore, we have

\[
\emptyset\neq\overline{G}_n|_{(\bar{x},[\bar{\tau}])}\subset(\overline{G}|_{(\bar{x},[\bar{\tau}])})^{\frac{\delta_2}{2}},\quad
\forall n\geq N_1, \ n\in\mathbf{N},
\]
where $\overline{G}_n|_{(\bar{x},[\bar{\tau}])}=(\pi^*|_{\overline{G}_n})^{-1}(\bar{x},[\bar{\tau}])$ and
$\overline{G}|_{(\bar{x},[\bar{\tau}])}=(\pi^*|_{\overline{G}})^{-1}(\bar{x},[\bar{\tau}])$.

Suppose that $(\bar{x},[\bar{\tau}])$ is a differentiability point of $\bar{u}$. Then

\[
\overline{G}|_{(\bar{x},[\bar{\tau}])}=\{(\bar{x},[\bar{\tau}],d_x\bar{u}(\bar{x},[\bar{\tau}]),d_\tau\bar{u}(\bar{x},[\bar{\tau}]))\}.
\]
It is not hard to see that $\bar{p}=d_x\bar{u}(\bar{x},[\bar{\tau}])$ and $\bar{e}=d_\tau\bar{u}(\bar{x},[\bar{\tau}])$. By (\ref{3-18}) there exists $N_2\in\mathbf{N}$ such that

\[
d((x_n,[\tau_n],d_x\bar{u}(x_n,[\tau_n]),d_\tau\bar{u}(x_n,[\tau_n])),(\bar{x},[\bar{\tau}],\bar{p},\bar{e}))<\frac{\delta_2}{2}
\]
for all $n\geq N_2$, $n\in\mathbf{N}$. Therefore,

\[
\emptyset\neq\overline{G}_n|_{(\bar{x},[\bar{\tau}])}\subset(\overline{G}|_{(\bar{x},[\bar{\tau}])})^{\frac{\delta_2}{2}}
=\{(\bar{x},[\bar{\tau}],\bar{p},\bar{e})\}^{\frac{\delta_2}{2}}\subset\{(x_n,[\tau_n],d_x\bar{u}(x_n,[\tau_n]),d_\tau\bar{u}(x_n,[\tau_n]))\}^{\delta_2}
\]
for all $n\geq\max\{N_1,N_2\}$, $n\in\mathbf{N}$, which contradicts (\ref{3-17}).

Suppose that $(\bar{x},[\bar{\tau}])$ is not a differentiability point of $\bar{u}$. In view of (\ref{3-18}),
we have $(\bar{x},[\bar{\tau}],\bar{p},\bar{e})\in\overline{G}$. Hence, there exists a $(\bar{u},L,0)$-calibrated curve $\gamma:(-\infty,[\bar{\tau}]]\to M$ associated with $(\bar{x},[\bar{\tau}])$ such that

\[
\bar{p}=\frac{\partial L}{\partial v}(\gamma([\bar{\tau}]),\dot{\gamma}([\bar{\tau}]),[\bar{\tau}]),\quad \bar{e}=-H(\bar{x},\bar{p},[\bar{\tau}]).
\]
Take $t'<[\bar{\tau}]$ close enough to $[\bar{\tau}]$ so that

\begin{equation}\label{3-19}
d((\gamma(t'),t',\frac{\partial L}{\partial v}(\gamma(t'),\dot{\gamma}(t'),t'),-H(\gamma(t'),\frac{\partial L}{\partial v}(\gamma(t'),\dot{\gamma}(t'),t'),t')),(\bar{x},[\bar{\tau}],\bar{p},\bar{e}))<\frac{\delta_2}{4}.
\end{equation}
Set $x'=\gamma(t')$, $p'=\frac{\partial L}{\partial v}(\gamma(t'),\dot{\gamma}(t'),t')$, $e'=-H(\gamma(t'),\frac{\partial L}{\partial v}(\gamma(t'),\dot{\gamma}(t'),t'),t')$. We can then rewrite (\ref{3-19}) as

\[
d((x',t',p',e'),(\bar{x},[\bar{\tau}],\bar{p},\bar{e}))<\frac{\delta_2}{4}.
\]
From (i) there exists $N_3\in\mathbf{N}$ such that $\overline{G}_n\subset\overline{G}^{\frac{\delta_2}{2}}$ for all $n\geq N_3$, $n\in\mathbf{N}$. Therefore,

\[
\emptyset\neq\overline{G}_n|_{(x',t')}\subset(\overline{G}|_{(x',t')})^{\frac{\delta_2}{2}}
=\{(x',t',d_x\bar{u}(x',t'),d_\tau\bar{u}(x',t'))\}^{\frac{\delta_2}{2}}=\{(x',t',p',e')\}^{\frac{\delta_2}{2}}.
\]
From (\ref{3-18}) there exists $N_4\in\mathbf{N}$ such that

\[
d((x_n,[\tau_n],d_x\bar{u}(x_n,[\tau_n]),d_\tau\bar{u}(x_n,[\tau_n])),(\bar{x},[\bar{\tau}],\bar{p},\bar{e}))<\frac{\delta_2}{8}
\]
for all $n\geq N_4$, $n\in\mathbf{N}$. Therefore,

\[
\emptyset\neq\overline{G}_n|_{(x',t')}\subset\{(x',t',p',e')\}^{\frac{\delta_2}{2}}\subset\{(x_n,[\tau_n],d_x\bar{u}(x_n,[\tau_n]),d_\tau\bar{u}(x_n,[\tau_n]))\}^{\delta_2}
\]
for all $n\geq\max\{N_3,N_4\}$, $n\in\mathbf{N}$, which is contrary to (\ref{3-17}). The proof of \Cref{thm_1} is thus complete. \qed

\section{Proof of \Cref{thm_2_intro}}\label{sec_thm2}

Hereafter we consider $L_1: \R \times TM \to \R$ a time dependent Lagrangian function such that 
\begin{equation*}
\| L_1(t + n, x, v) - \overline{L_1}(t, x, v) \|_{C^2_c( T M\times \R;\R)} \leq C e^{-\rho n}, \mbox{ for some } C\in\mathbb{R},\;\rho > 0\; \mbox{ and }\; \forall\; n \in \mathbf{N}.
\end{equation*}

\subsection{Convergence to periodic solutions}

The next results can be proved by an easy adaptation of the one in \cite[Proposition 2.1, Proposition 2.5]{Sci}.

\begin{lemma}
For fixed $t'>0$ there exists $t_0>0$ such that 
\begin{equation*}
-e^{-\rho t'} t' + \inf_{x \in M} \left(F^{L_1}_{0, t}(y, x) + F^{\overline{L}}_{0, t'}(x, z)  \right) \leq F^{L_1}_{0, t+t'}(y, z) \leq e^{-\rho t'} t' + \inf_{x \in M} \left(F^{L_1}_{0, t}(y, x) + F^{\overline{L}}_{0, t'}(x, z)  \right)
\end{equation*}
for any $t \in (t_0, \infty) $ and $y$, $z \in M$, where $\rho$ is given in \eqref{rate_Lagrangian}. 
\end{lemma}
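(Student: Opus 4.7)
The plan is to view this as a concatenation/splitting estimate (one inequality per direction), where the error is controlled by comparing $L_1$ to its periodic limit $\overline{L}_1$ on the tail time interval $[t, t+t']$, using the exponential rate \eqref{rate_Lagrangian}. All curves appearing will be (near-)minimizers, hence have a priori bounded velocity by Tonelli superlinearity, so the $C^2_c$-convergence of Lagrangians can be converted to a uniform $C^0$-estimate on the relevant phase-space set.

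\emph{Step 1 (Lagrangian comparison on compact velocity sets).} I would first show that for any compact $\mathcal{K}\subset TM$ there is $C'>0$ such that
\[
|L_1(x,v,s)-\overline{L}_1(x,v,s)|\leq C'e^{-\rho s}\quad\forall (x,v)\in\mathcal{K},\;\forall s\geq 0,
\]
where $\overline{L}_1$ is extended $1$-periodically to $\R$. This follows from \eqref{rate_Lagrangian} by writing $s=n+r$ with $n=\lfloor s\rfloor$, $r\in[0,1)$, and using $1$-periodicity of $\overline{L}_1$ to absorb the fractional part into the constant. By the standard a priori compactness for Tonelli minimizers (superlinearity plus boundedness of $M$), the velocities of minimizers of $F^{L_1}_{0,t}$, $F^{L_1}_{0,t+t'}$, $F^{\overline{L}}_{0,t'}$ lie in a common compact set $\mathcal{K}$, independent of $t$.

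\emph{Step 2 (Upper bound).} Fix $x^*\in M$ achieving (up to $\varepsilon$) the infimum on the right, and let $\gamma_1:[0,t]\to M$, $\gamma_2:[0,t']\to M$ be (near-)minimizers with $\gamma_1(0)=y,\,\gamma_1(t)=x^*=\gamma_2(0),\,\gamma_2(t')=z$. Concatenate them (shifting $\gamma_2$ to the interval $[t,t+t']$) to get an admissible curve for $F^{L_1}_{0,t+t'}(y,z)$. Step 1 applied to the tail integral, together with the $1$-periodicity of $\overline{L}_1$ under the substitution $u=s-t$, gives
\[
\int_t^{t+t'} L_1(\gamma_2(\cdot-t),\dot\gamma_2(\cdot-t),s)\,ds\leq F^{\overline{L}}_{0,t'}(x^*,z)+C't'e^{-\rho t}.
\]

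\emph{Step 3 (Lower bound).} Take $\gamma:[0,t+t']\to M$ a (near-)minimizer for the left side, set $x=\gamma(t)$, and split the action at $s=t$. The first piece satisfies $\int_0^tL_1\geq F^{L_1}_{0,t}(y,x)$; Step 1 applied to the second piece yields $\int_t^{t+t'}L_1\geq F^{\overline{L}}_{0,t'}(x,z)-C't'e^{-\rho t}$. Taking the infimum over $x\in M$ gives the lower bound.

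\emph{Step 4 (Choice of $t_0$).} For $t'$ fixed, choose $t_0$ so large that $C'e^{-\rho t_0}\leq e^{-\rho t'}$ (explicitly $t_0\geq t'+\rho^{-1}\log C'$). Then for $t>t_0$ the error $C't'e^{-\rho t}$ is dominated by $t'e^{-\rho t'}$, yielding both inequalities of the lemma.

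\emph{Main obstacle.} The chief technicality is the phase bookkeeping for $\overline{L}_1$: since $\overline{L}_1$ is $1$-periodic in time but $t$ is an arbitrary real, the identification $F^{\overline{L}}_{t,t+t'}(x,z)=F^{\overline{L}}_{0,t'}(x,z)$ used in both Steps 2 and 3 requires aligning the phase, which is done by the change of variables $u=s-t$ combined with periodicity (and, strictly, by absorbing a shift by $\lfloor t\rfloor$ into the periodicity). A secondary technical point is the uniform-in-$t$ a priori velocity bound for minimizers, which is needed so that $\mathcal{K}$ in Step 1 can be chosen once and for all; this is standard for $C^2$ Tonelli Lagrangians satisfying \eqref{TonelliAssumption}, since the Euler-Lagrange minimizers on compact manifolds have velocities controlled uniformly in terms of the endpoints and the time span's lower bound.
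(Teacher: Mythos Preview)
Your proposal is correct and coincides with the argument the paper invokes: the paper gives no self-contained proof but defers to an easy adaptation of \cite[Propositions~2.1 and~2.5]{Sci}, which is precisely the concatenation/splitting scheme you describe (compare $L_1$ with $\overline{L}_1$ on the tail interval via \eqref{rate_Lagrangian}, use the a~priori velocity bound for Tonelli minimizers to work on a fixed compact of $TM$, then choose $t_0$ so that $C'e^{-\rho t}\le e^{-\rho t'}$). The phase-alignment subtlety you single out is the only new wrinkle in passing from the autonomous limit Lagrangian treated in \cite{Sci} to the $1$-periodic $\overline{L}_1$ here.
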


\begin{lemma}
For any given $t' > 0$ there exists $t_0 > 0$ such that 
\begin{equation*}
|T^{1}_{t+t'} \varphi - \overline{T}_{t'} \circ T^{1}_t \varphi| \leq e^{-\rho t'} t'
\end{equation*}
for any $\varphi \in C(M; \R)$ and any $t > t_0$, where $T^{1}_t$ and $\overline{T}_t$ denote the Lax-Oleinik semigroup associated with $L_1$ and $\overline{L}$, respectively. 
\end{lemma}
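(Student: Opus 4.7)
The plan is to reduce this to the preceding lemma by unfolding the variational definition of both Lax-Oleinik semigroups. I would begin by recording the two representations
\begin{equation*}
T^1_{t+t'}\varphi(z) = \inf_{y \in M}\bigl(\varphi(y) + F^{L_1}_{0,t+t'}(y,z)\bigr),
\end{equation*}
\begin{equation*}
\overline{T}_{t'} \circ T^1_t \varphi(z) = \inf_{x,y \in M}\bigl(\varphi(y) + F^{L_1}_{0,t}(y,x) + F^{\overline{L}}_{0,t'}(x,z)\bigr),
\end{equation*}
where the second identity follows by substituting the definition of $T^1_t\varphi(x)$ into that of $\overline{T}_{t'}$ and invoking the elementary commutation $\inf_{x}\inf_{y} = \inf_{x,y}$.

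Next, let $t_0 > 0$ be the threshold furnished by the preceding lemma and fix $t > t_0$. For each $y, z \in M$ that lemma can be rephrased as
\begin{equation*}
\bigl| F^{L_1}_{0,t+t'}(y,z) - \inf_{x \in M}\bigl(F^{L_1}_{0,t}(y,x) + F^{\overline{L}}_{0,t'}(x,z)\bigr) \bigr| \leq e^{-\rho t'} t'.
\end{equation*}
Adding $\varphi(y)$ inside the absolute value and taking $\inf_{y \in M}$ of both arguments, the sub/super-additivity of infima under a uniform additive error, together with the interchange of the $\inf_y$ and $\inf_x$ on the right, yields
\begin{equation*}
\bigl| T^1_{t+t'}\varphi(z) - \overline{T}_{t'} \circ T^1_t \varphi(z) \bigr| \leq e^{-\rho t'} t',
\end{equation*}
uniformly in $z \in M$, which is the claim.

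There is no genuine obstacle: the content of the statement is already encoded in the action-level estimate of the preceding lemma, and the only manipulation required is the pointwise interchange of infima, which is valid without any regularity hypothesis on $\varphi$. The one point worth remarking is that the threshold $t_0$ delivered by the preceding lemma depends on $t'$ only through the error $e^{-\rho t'} t'$, so it is independent of $\varphi$ and of the endpoints; this is precisely what makes the resulting estimate uniform over $\varphi \in C(M;\R)$.
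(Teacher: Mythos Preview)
Your argument is correct and is exactly the intended one: the paper does not write out a proof of this lemma but simply refers to \cite[Proposition~2.5]{Sci}, and the derivation there proceeds precisely by writing both semigroups as infima over action functionals and then invoking the action-level estimate of the preceding lemma, as you do. The only step of substance is the elementary fact that $|\inf_y f(y)-\inf_y g(y)|\leq \sup_y|f(y)-g(y)|$, which you apply correctly after adding $\varphi(y)$ and exchanging the two infima.
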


Next, let us define a new evolutive operator $\mathcal{T}_t: C(M; \R) \to \R$ by 
\begin{equation}\label{new_Lax}
\mathcal{T}_t \varphi(x) = T^{1}_t \varphi(x) - \inf_{x \in M} T^{1}_t \varphi(x), \quad (t > 0)
\end{equation}
where we recall that $T^{1}_t$ denotes the Lax-Oleinik semigroup associated with the time-dependent non-periodic Lagrangian function $L_1$. 
%So, we immediately have that 
%\begin{equation*}
%\mathcal{T}_t \varphi(x) \varphi(x) = z(x, t) - \inf_{x \in M} z(x, t)
%\end{equation*} 
%where $z: M \times [0, \infty) \to \R$ solves
%\begin{equation*}
%\begin{cases}
%\partial_t z(x, t) + \widehat{H}(x, d_x z(x, t), t)= 0, & (x, t) \in M \times [0, \infty)
%\\
%z(x, 0) = \varphi(x), & x \in M
%\end{cases}
%\end{equation*}
%in the viscosity sense. 

Next, by adapting the reasoning in \cite[Proposition 2.6]{Sci} we get the following.

\begin{proposition}\label{prop}
Let $\varphi \in C(M; \R)$. Then, the following hold.
\begin{enumerate}
\item For any $t > 0$ and any $x \in M$, $\mathcal{T}_t \varphi(x)$ is finite. 
\item For any $t > 0$ we have 
\[
\mathcal{T}_t \varphi(x) \geq 0
\]
and
\begin{equation*}
\inf_{y \in M} \inf_{z \in M} \left(F^{L_1}_{0, t}(y, x) - F^{L_1}_{0, t}(y, z) \right) \leq \mathcal{T}_t \varphi(x) \leq \sup_{y \in M} \inf_{z \in M} \left(F^{L_1}_{0, t}(y, x) - F^{L_1}_{0, t}(y, z) \right).
\end{equation*}
\item For any $t' > 0$ and any $\eps > 0$ there exists$t_0 > 0$ such that 
\begin{equation*}
\mathcal{T}_{t+t'} \varphi(x) \geq \inf_{y \in M} \inf_{z \in M} \left(F^{\overline{L}}_{0, t}(y, x) - F^{\overline{L}}_{0, t}(y, z) \right) - 2 e^{-\rho t'} t'
\end{equation*}
and
\begin{equation*}
\mathcal{T}_{t+t'} \varphi(x) \leq \sup_{y \in M} \inf_{z \in M} \left(F^{\overline{L}}_{0, t}(y, x) - F^{\overline{L}}_{0, t}(y, z) \right) + 2 e^{-\rho t'} t'
\end{equation*}
for any $t > t_0$. 
\item For any $t' > 0$ there exists $\eps(t') > 0$, with $\displaystyle{\lim_{t' \to 0}} \eps(t') = 0$, and $t_0 > 0$ such that for any $t > 0$ we have 
\begin{equation}\label{limit_est}
\left|\mathcal{T}_{t+t'} \varphi(x) - \overline{T}_{t'} \circ \mathcal{T}_{t} \varphi(x) + \inf_{x \in M}\overline{T}_{t'} \circ \mathcal{T}_{t} \varphi(x)  \right| \leq  e^{-\rho t'},
\end{equation}
where $\overline{T}_t$ denotes the Lax-Oleinik semigroup associated with the limiting periodic Lagrangian $\overline{L_1}$.
\item The function $(t, x) \mapsto \mathcal{T}_t \varphi(x)$ is continuous on $[0, \infty) \times M$ and for any $t_0 > 0$ it is equi-Lipschitz on $[t_0, \infty) \times M$. 
\end{enumerate} 
\end{proposition}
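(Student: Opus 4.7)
My plan is to prove the five statements in order, following the blueprint of \cite[Proposition 2.6]{Sci} but exploiting one additional ingredient that makes the argument work for the \emph{normalized} semigroup $\mathcal{T}_t$ rather than the usual Lax--Oleinik one: the periodic Lax--Oleinik semigroup is invariant under additive constants, i.e.\ $\overline{T}_{t'}(u+c)=\overline{T}_{t'}u+c$ for every $u\in C(M;\R)$ and every $c\in\R$, as is immediate from the definition since a constant simply translates the contribution $u(\gamma(0))$ in the integral formula. Throughout I would use the classical representation
\[
T^1_t\varphi(x)=\inf_{y\in M}\bigl(\varphi(y)+F^{L_1}_{0,t}(y,x)\bigr)
\]
together with the two preceding lemmas (which approximate $F^{L_1}_{0,t+t'}$ by $F^{L_1}_{0,t}\circ F^{\overline{L_1}}_{0,t'}$, and $T^1_{t+t'}\varphi$ by $\overline{T}_{t'}\circ T^1_t\varphi$, both up to an error of order $e^{-\rho t'}t'$).

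Item (1) is immediate from continuity of $T^1_t\varphi$ on $M\times(0,\infty)$ and compactness of $M$. For item (2), non-negativity is built into the definition, and the two-sided bound is obtained by substituting the Lax--Oleinik representation into $T^1_t\varphi(x)-\inf_z T^1_t\varphi(z)$, then choosing a near-minimizer $y^\ast$ for the second infimum so that the $\varphi(y^\ast)$ contributions cancel, leaving an expression controlled above and below by $F^{L_1}_{0,t}(y,x)-F^{L_1}_{0,t}(y,z)$ quantities as stated. Item (3) is a direct substitution of the first preceding lemma into the bounds of (2): replacing $F^{L_1}_{0,t+t'}(y,x)$ and $F^{L_1}_{0,t+t'}(y,z)$ by $\inf_{w\in M}(F^{L_1}_{0,t}(y,w)+F^{\overline{L_1}}_{0,t'}(w,\cdot))$, one sees that the $F^{L_1}_{0,t}$ layer becomes a common additive term (after a further bookkeeping over the intermediate variable) and cancels between the minuend and subtrahend, leaving only $F^{\overline{L_1}}_{0,t'}$ terms and a total error of at most $2e^{-\rho t'}t'$.

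The main content is item (4). Setting $c_t:=\inf_{x\in M}T^1_t\varphi(x)$, so that $\mathcal{T}_t\varphi=T^1_t\varphi-c_t$, the additive-constant invariance gives
\[
\overline{T}_{t'}\circ\mathcal{T}_t\varphi(x)=\overline{T}_{t'}(T^1_t\varphi)(x)-c_t.
\]
By the second preceding lemma, $\bigl|T^1_{t+t'}\varphi(x)-\overline{T}_{t'}(T^1_t\varphi)(x)\bigr|\leq e^{-\rho t'}t'$ uniformly in $x$ for $t>t_0$, so
\[
T^1_{t+t'}\varphi(x)=\overline{T}_{t'}\circ\mathcal{T}_t\varphi(x)+c_t+O(e^{-\rho t'}t').
\]
Taking the infimum in $x$ of both sides yields $c_{t+t'}=\inf_{x\in M}\overline{T}_{t'}\circ\mathcal{T}_t\varphi(x)+c_t+O(e^{-\rho t'}t')$; subtracting this from the previous display eliminates $c_t$ entirely and produces exactly \eqref{limit_est}, with the residual $t'$ absorbed into the exponential for $t'$ in any bounded range (the stated $\eps(t')\to 0$ as $t'\to 0$ then comes from the continuity of $T^1_t\varphi$ at $t=t$).

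For item (5), continuity of $(t,x)\mapsto\mathcal{T}_t\varphi(x)$ on $[0,\infty)\times M$ follows because $T^1_t\varphi$ has this regularity (with $T^1_0\varphi=\varphi$) and the scalar $c_t$ depends continuously on $t$. The equi-Lipschitz property on $[t_0,\infty)\times M$ comes from the uniform-in-time Lipschitz bound of Fathi type recalled already in the proof of \Cref{le1}, passed to both $T^1_t\varphi$ and $c_t$. The main obstacle I anticipate is in (4): naive estimation produces additive errors of the form $\sup_x\bigl|\overline{T}_{t'}(T^1_t\varphi)(x)-T^1_{t+t'}\varphi(x)\bigr|+\bigl|c_{t+t'}-c_t-\inf_x\overline{T}_{t'}\circ\mathcal{T}_t\varphi(x)\bigr|$, and it is only by observing that both terms are bounded by the \emph{same} pointwise approximation in the second lemma, via the additive-constant invariance, that one avoids spurious factors and obtains an error uniform in $t$ with the correct decay in $t'$.
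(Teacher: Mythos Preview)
Your proposal is correct and follows exactly the approach the paper indicates: the paper does not give a proof at all but simply states that the result follows ``by adapting the reasoning in \cite[Proposition 2.6]{Sci}'', which is precisely the blueprint you work from. Your key observation---that the additive-constant invariance $\overline{T}_{t'}(u+c)=\overline{T}_{t'}u+c$ is what allows the normalization constants $c_t$ to cancel in item (4)---is exactly the adaptation needed to pass from the unnormalized setting of \cite{Sci} to the operator $\mathcal{T}_t$, so your write-up in fact supplies the details the paper omits.
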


\subsubsection{Proof of ($i$) in \Cref{thm_2_intro}}

From (5.) in \Cref{prop}, appealing to Ascoli-Arzela theorem there exists $\{t_n\}_{n \in N}$  and $z_{\infty} \in C(M; \R)$ such that $t_n \uparrow \infty$ as $n \uparrow \infty$ and 
\begin{equation*}
\lim_{n \to \infty} \mathcal{T}_{t_n} z_0(x) = z_{\infty}(x). 
\end{equation*}
On the other hand, by \eqref{limit_est} in \Cref{prop} we have
\begin{multline}\label{nm_limit}
\lim_{m \to \infty \atop n \to \infty}\mathcal{T}_{\tau + m + t_n} z_0(x)= \lim_{m \to \infty \atop n \to \infty} \left(\overline{T}_{m+\tau} \circ \mathcal{T}_{t_n} z_0(x) - \inf_{x \in M} \overline{T}_{m+\tau} \circ \mathcal{T}_{t_n} z_0(x) \right)
\\
= \lim_{m \to \infty} \left(\overline{T}_{m+\tau} \circ \lim_{n \to \infty}\mathcal{T}_{t_n} z_0(x) - \inf_{x \in M} \overline{T}_{m+\tau} \circ \lim_{n \to \infty} \mathcal{T}_{t_n} z_0(x) \right)
\\
=  \lim_{m \to \infty} \left(\overline{T}_{m+\tau} z_{\infty}(x) - \inf_{x \in M}   \lim_{m \to \infty} \overline{T}_{m+\tau} z_{\infty}(x) \right)
= \overline{u}(x, \tau) - \inf_{x \in M} \overline{u}(x, \tau).
\end{multline}
Hence, setting 
\begin{equation*}
w(x, \tau) :=\lim_{m \to \infty \atop n \to \infty}\mathcal{T}_{\tau + t_n + m} z_0(x)
\end{equation*}
we get
\begin{equation}\label{u_inf}
w(x, \tau) = \overline{u}(x, \tau) - \inf_{x \in M} \overline{u}(x, \tau). 
\end{equation}
So, by construction of $\overline{u}$ in \eqref{u_bar} we have that the function $w(x, \cdot)$ is a $1$-periodic viscosity solution of \eqref{1-1}.
Next, we proceed to show the exponential rate of convergence. We first recall that from (4.) in \Cref{prop} we know that
\begin{equation*}
\left|\mathcal{T}_{\tau + t_n + m} z_0(x) - \left(\overline{T}_{n+\tau} \circ \mathcal{T}_{m} z_0(x) - \inf_{x \in M} \overline{T}_{n+\tau} \circ \mathcal{T}_{m} z_0(x) \right) \right| \leq e^{-\rho (m+t_n)}.
\end{equation*}
%Recalling that from \cite{W1} there exists $\rho > 0$, $K > 0$ such that
%\begin{equation*}
%\left\| \overline{T}_{n + \tau} z_0(x) - \overline{u}(x, \tau) \right\|_{\infty} \leq K e^{-\rho n}, \quad \forall\; n \in \mathbf{N}
%\end{equation*} 
%following the arguments in \eqref{nm_limit} we can set
%\begin{equation*}
%\eps(\tau, n) := e^{-\rho n}
%\end{equation*}
%independently of $\tau \in [0,1]$, up to a renormalization of the function. 
Hence, we get \eqref{exp_1} which completes the proof. \qed

\subsection{Convergence of adherences}

Given any Tonelli Lagrangian $L: TM \times \R \to \R$ let 
\[
B(k) = \{(x, v) \in TM: \|v\| \leq k \}
\]
and for any $t > 0 $ define
\begin{equation*}
U_t^L : C(M; \R) \times B(k) \to \R
\end{equation*}
by
\begin{equation*}
U_t^L (\varphi, (x, v)) = \varphi(\gamma_{x, v}(-t)) + \int_{-t}^{0} L(\gamma_{x, v}(s), \dot\gamma_{x, v}(s), s + t)\;ds.
\end{equation*} 
Then, it is easy to see that 
\begin{equation*}
T_t \varphi (x) = \min\{U_t (\varphi, (x, v)): (x, v) \in B(k)\}. 
\end{equation*}

Finally, we define 
\begin{equation*}
M_{L, x, t}(\varphi) = \left\{v \in B(k)_{|_{T_x M}} : U_t(\varphi, (x, v)) =\min_{w \in T_x M} U_t\left(\varphi, (x, w)\right) \right\}
\end{equation*}
and thus we have
\begin{equation*}
G\left(dT_t \varphi\right) = \mathcal{L}\left(\bigcup_{x \in M, \; \sharp \left(M_{L, x, t}(\varphi)\right) = 1} M_{L, x, t}(\varphi) \right).
\end{equation*}
where $\sharp M_{L, x, t}(\varphi)$ denotes the cardinality of the set $M_{L, x, t}(\varphi)$. Note that, the above characterization only consider the gradient w.r.t. the space variable of the value function.

From \cite[Lemma 3.1]{Sci} we have the following:
\begin{equation}\label{renorm}
\lim_{n \to \infty} \left(\min_{x \in M} T^1_{n-t}\varphi(x) -  \min_{x \in M} T^1_{\tau+n}\varphi(x)\right) = 0, \mbox{ for any } \tau \in [0,1] \mbox{ and } 0 < t < \langle \tau\rangle
\end{equation}
by the normalization assumption that the critical value is zero. 

\begin{proposition}\label{M_convergence}
For any $\eps > 0$, any $t > \delta$, for some $\delta > 0$, and any $x \in \mbox{Dom}(dw(\cdot, t)$, where $w$ is the limiting function defined in ($i$) in \Cref{thm_2_intro}, there exists $t_{\eps} > 0$ such that 
\begin{equation*}
\rho\left(M_{L_1, x, t}(\varphi), M_{\overline{L}, x, t}(\varphi) \right) \leq \eps. 
\end{equation*}
\end{proposition}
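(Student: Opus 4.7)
The strategy is to show that, at a differentiability point $x$ of $w(\cdot,[t])$, both sets $M_{L_1,x,t}(\varphi)$ and $M_{\overline{L},x,t}(\varphi)$ collapse, as $t\to\infty$, to the same singleton determined by the unique $w$-calibrated curve ending at $(x,[t])$. The bound $\rho(\cdot,\cdot)\le\eps$ then follows by triangle inequality.

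First I would establish a priori compactness: by the superlinearity in \Cref{TonelliAssumption}(ii) together with the Mather/Fathi bound on velocities of minimizing extremals, there exists $k=k(\varphi,\delta)>0$ such that both $M_{L_1,x,t}(\varphi)$ and $M_{\overline{L},x,t}(\varphi)$ are contained in $B(k)$ for every $t>\delta$. Next, for the periodic Lagrangian $\overline{L}$, I would invoke \Cref{thm_1}: the adherence $\overline{G}_n$ converges to $\overline{G}$ in Hausdorff distance, and by \eqref{u_inf} the assumed differentiability of $w(\cdot,[t])$ at $x$ transfers to differentiability of $\overline{u}(\cdot,[t])$ at $x$; consequently $\overline{G}|_{(x,[t])}$ is the singleton $(x,[t],d_x\overline{u},d_t\overline{u})$, and by the Legendre transform one recovers a unique velocity $v^\ast=\dot\gamma([t])$ associated with the unique $(\overline{u},\overline{L},0)$-calibrated curve $\gamma$. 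Passing through the one-to-one correspondence $v\leftrightarrow(x,p)$, this gives $M_{\overline{L},x,t}(\varphi)\to\{v^\ast\}$ as $t\to\infty$.

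The core of the argument is to prove the same limit $\{v^\ast\}$ for $M_{L_1,x,t}(\varphi)$, which I would do by contradiction. Assume sequences $t_n\uparrow\infty$ and $v_n\in M_{L_1,x,t_n}(\varphi)$ with $\|v_n-v^\ast\|_x\ge\eps_0>0$; by Step~1 extract $v_n\to v_\infty\ne v^\ast$. Let $\gamma_n:[-t_n,0]\to M$ be the associated $L_1$-minimizer with $\gamma_n(0)=x$, $\dot\gamma_n(0)=v_n$. Using \eqref{rate_Lagrangian} and a Gronwall comparison on the Euler--Lagrange flows, for any fixed $T>0$ the curves $\gamma_n|_{[-T,0]}$ converge in $C^1$ to the solution $\gamma^\infty$ of the $\overline{L}$-Euler--Lagrange equation with $(\gamma^\infty(0),\dot\gamma^\infty(0))=(x,v_\infty)$. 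The hyperbolicity of the Aubry set of $\overline{L}$ (a unique hyperbolic periodic orbit) supplies an exponential tail bound on the action contribution of $\gamma_n$ far from the endpoint, so one may combine (i) of \Cref{thm_2_intro}, the renormalization \eqref{renorm}, and \Cref{prop}(4) to pass to the limit in
\[
\mathcal{T}_{t_n}\varphi(x)=U^{L_1}_{t_n}(\varphi,(x,v_n))-\inf_{y\in M}T^1_{t_n}\varphi(y)
\]
and identify $\gamma^\infty$ as a $(w,\overline{L},0)$-calibrated curve associated with $(x,[t])$. Since $w$ is differentiable at $(x,[t])$, such a calibrated curve is unique, hence $\dot\gamma^\infty(0)=v^\ast$, contradicting $v_\infty\ne v^\ast$. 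Combining the two limits yields, for every $\eps>0$, a threshold $t_\eps$ past which each set sits in the $\eps/2$-neighborhood of $\{v^\ast\}$, giving $\rho(M_{L_1,x,t}(\varphi),M_{\overline{L},x,t}(\varphi))\le\eps$.

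The main obstacle is the contradiction step: one must transfer the \emph{finite-horizon} minimality of $\gamma_n$ for the non-periodic action over the unbounded interval $[-t_n,0]$ into a \emph{backward-infinite} calibration property for the periodic weak KAM solution $w$. The three points to reconcile are (a) the diverging length of the integration interval, (b) the mismatch between $L_1(\cdot,\cdot,s+t_n)$ and $\overline{L_1}(\cdot,\cdot,s+t_n)$ away from the endpoint, and (c) the additive renormalization $\inf_yT^1_{t_n}\varphi(y)$. The hyperbolicity assumption on the Aubry set is what makes all three controllable simultaneously: it forces $\gamma_n$ to spend all but a bounded time near the periodic orbit, where \eqref{rate_Lagrangian} upgrades to an exponentially small action gap, and where the renormalization constants align via \eqref{renorm}.
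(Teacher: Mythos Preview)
Your overall architecture matches the paper's: argue by contradiction, extract limit curves from the minimizers, show the limit curves are $(w,\overline{L},0)$-calibrated at $(x,[t])$, and invoke uniqueness of calibrated curves at a differentiability point of $w$. Where you diverge is in the mechanism you propose for passing to the limit, and this is the point worth flagging.

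The paper does \emph{not} use hyperbolicity of the Aubry set in this proposition, nor any tail estimate on the action over the unbounded interval $[-t_n,0]$. Instead it exploits the dynamic programming principle: for any fixed $t>0$ one splits
\[
T^1_{\tau+n}\varphi(x)\;\ge\;T^1_{n-t}\varphi(\gamma_n(-t-\tau))+\int_{-t-\tau}^{0}L_1(s+\tau+n,\gamma_n,\dot\gamma_n)\,ds,
\]
subtracts the renormalizing infima, and sends $n\to\infty$. The left side goes to $w(x,[\tau])$ by part~($i$) of \Cref{thm_2_intro}; the value-function term on the right goes to $w(\gamma_*(-t-\tau),\langle t\rangle)$ by the same together with \eqref{renorm}; and the action integral, being over the \emph{fixed finite} window $[-t-\tau,0]$, converges because $L_1(\cdot,\cdot,s+\tau+n)\to\overline{L}(\cdot,\cdot,s+\tau)$ uniformly on compacts and the minimizers have bounded velocity. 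This yields inequality \eqref{L1} directly, hence calibration of $\gamma_*$. The same splitting applied to $\overline{T}_{\tau+n}$ gives calibration of $\overline\gamma_*$, and the contradiction follows.

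So your three ``obstacles'' (a)--(c) dissolve once you use the semigroup inequality to localize to a finite time window; there is no need for Gronwall on the flows, no exponential tail bound, and no appeal to the hyperbolic structure of the Aubry set at this stage. Your route could in principle be pushed through, but it is heavier and, more importantly, your diagnosis that ``the hyperbolicity assumption \dots\ is what makes all three controllable simultaneously'' is not what the paper actually does here. A secondary difference: rather than first showing $M_{\overline{L},x,t}(\varphi)\to\{v^\ast\}$ via \Cref{thm_1} and then treating $M_{L_1}$ separately, the paper runs both sequences in parallel inside a single contradiction and compares the two limit calibrated curves directly.
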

\proof 

Assume, by contradiction, that for any $t > t_0$ there exists $\eps_0 > 0$ such that 
\[
\sup_{v \in M_{L_1, x, t}(\varphi)} d(v, M_{\overline{L}, x, t}(\varphi)) \geq \eps{_0}. 
\]
Then, for any $t > \delta$, for $\delta>0$ suffinciently small, and any $x \in \mbox{Dom}(dw(\cdot, t))$, there exist $v_{x, t} \in M_{L_1, x, t}(\varphi)$ and $\overline{v}_{x, t} \in M_{\overline{L}, x, t}(\varphi)$ such that $d(v_{x, t}, \overline{v}_{x, t}) \geq \eps{_0}/2$. 
For any $n > n_0$, for $n_0 \in \mathbf{N}$ large enough, and any $\tau \in [0,1]$ there exists a minimal curve $\gamma_n: [-\tau-n, 0] \to M$ with $\gamma_n(0) = x$ such that
\begin{equation}\label{T1}
T^1_{\tau + n} \varphi(x) = \varphi(\gamma_n(-\tau - n)) + \int_{- \tau - n}^{0} L_1(s + \tau + n, \gamma_n(s), \dot\gamma_n(s))\;ds 
\end{equation}
and a minimal curve $\overline\gamma_n: [-\tau-n, 0] \to M$ with $\overline\gamma_n(0) = x$ such that
\begin{equation}\label{T2}
\overline{T}_{\tau + n} \varphi(x) = \varphi(\overline\gamma_n(-\tau - n)) + \int_{- \tau - n}^{0} \overline{L}(s + \tau + n, \overline\gamma_n(s), \dot{\overline\gamma}_n(s))\;ds. 
\end{equation}
Set $v_n= \dot\gamma_n(0)$ and $\overline{v}_{n} = \dot{\overline\gamma}_n(0)$. By the boundedness of velocities of minimizing curve  we have that $\{\gamma_n\}_{n \in \mathbf{N}}$ and $\{\overline\gamma_n\}_{n \in  \mathbf{N}}$ converge uniformly to $\gamma_{*}$ and $\overline\gamma_{*}$ on any closed interval of $(-\infty, 0]$. 
On the other hand, by \eqref{T1} for any $0 < t < n$ we have 
\begin{multline*}
T^1_{\tau+n}\varphi(x) = \varphi(\gamma_n(-\tau - n)) + \int_{-\tau-n}^{-t - \tau} L_1(s + \tau + n, \gamma_n(s), \dot\gamma_n(s))\;ds  
\\ + \int_{-t - \tau}^{0} L_1(s + \tau + n, \gamma_n(s), \dot\gamma_n(s))\;ds 
\\
\geq T^1_{n-t}\varphi(\gamma_n(-t - \tau)) + \int_{-t - \tau}^{0}L_1(s + \tau + n, \gamma_n(s), \dot\gamma_n(s))\;ds . 
\end{multline*}
Thus, we get 
\begin{multline*}
T^1_{\tau+n}\varphi(x)  - \min_{x \in M} T^1_{\tau+n}\varphi(x)  - T^1_{n-t}\varphi(\gamma_n(-t - \tau)) + \min_{x \in M} T^1_{n-t}\varphi(x)
\\
\geq \int_{-t - \tau}^{0} L_1(s + \tau + n, \gamma_n(s), \dot\gamma_n(s))\;ds + \min_{x \in M} T^1_{n-t}\varphi(x) -  \min_{x \in M} T^1_{\tau+n}\varphi(x).
\end{multline*}
Hence, as $n \uparrow \infty$ by \eqref{renorm} we deduce
\begin{equation}\label{L1}
w(x, [ \tau ]) - w(\gamma_{*}(-t - \tau), \langle  t\rangle) \geq \int_{-t- \tau}^{0} \overline{L}(s + \tau, \gamma_{*}(s), \dot\gamma_{*}(s))\;ds \mbox{ for any } t \in (-\infty, [ \tau ]]
\end{equation}
which implies, by a re-parametrization of the curves $\gamma_n$, that $\gamma_{*}$ is a calibrated curve for $w$ by definition \eqref{calibrated}. Similarly, from \eqref{T2} we obtain 
\begin{equation*}
\overline{T}_{\tau + n} \varphi(x) - \overline{T}_{n-t} \varphi(\overline\gamma_n(-t-\tau)) \geq \int_{-t-\tau}^{0} \overline{L}(s+\tau, \overline\gamma_n(s), \dot{\overline\gamma}(s))\;ds
\end{equation*}
which yields 
\begin{equation}\label{L2}
\overline{u}(x, \tau) - \overline{u}(\overline\gamma_{*}(-t-\tau), \langle t \rangle) \geq \int_{-t-\tau}^{0} \overline{L}(s+\tau, \overline\gamma_{*}(s), \dot{\overline\gamma}_{*}(s))\;ds \mbox{ for any } t \in (-\infty, [ \tau ]].
\end{equation}
Hence, $\overline\gamma_{*}$ is a calibrated curve for $\overline u$ and by \eqref{u_inf} it is also a calibrated curve for $w$ which contradicts the differentiability of $w$ in $x$. \qed

%where we also used the fact that $\langle t \rangle = \tau$. 

\medskip

\noindent {\it Proof of ($ii$) in \Cref{thm_2_intro}}. First, since 
\begin{equation*}
\lim_{n \to \infty} \| L_1(t+n, x, v) - \overline{L} (t, x, v) \|_{C^2(\R \times TM)} = 0
\end{equation*}
we have that 
\begin{equation}\label{M1}
\lim_{n \to \infty} \left( \mathcal{L}^1(t+n, B) - \overline{\mathcal{L}}(t, B) \right) = 0
\end{equation}
by the continuity of the Legendre Transform for any Borel compact subset $B$ of $TM$. Moreover, by compactness of $M$ we can find a finite subset $\{x_1, \dots, x_m\}$ of $M$ such that for any $x \in M$ there exists $x_i$ with $x \in B_{\eps}(x_i)$ and $t_{\eps} > 0$ for which by \Cref{M_convergence} we have
\begin{equation}\label{M2}
\rho\left(M_{L_1, x, t}(\varphi), M_{\overline{L}, x_i, t}(\varphi) \right) \leq \eps, \quad \forall\; t > t_{\eps}.
\end{equation}
Thus, combining \eqref{M1} and \eqref{M2} we get 
\begin{equation*}
\lim_{n \to \infty} d_{H} \left(\mathcal{G}\left(d T^1_{n+t} \varphi\right), \mathcal{G}\left(d \overline{T}_{n+t} \varphi\right)  \right) = 0, \quad \forall\; t \in \R.  
\end{equation*}
Hence, in conclusion, by triangular inequality and \Cref{thm_1} we get the result. \qed

\section{Second-order coupled oscillators}\label{Kuramoto}

We consider a system of $N$ coupled oscillators described by the following equations
\begin{equation*}
\ddot{\theta}_i=\Omega_i+\sum_{j=1}^N a_{ij}\sin(\theta_j-\theta_i)\ ,
\end{equation*}
where $\theta_i$ and $\Omega_i$ are the phase and natural frequency of i-th oscillator, respectively.
The coefficients $a_{ij}$ represent the coupling between the j-th
oscillator and the i-th oscillator and are symmetric.

This model is a modified version of the second-order Kuramoto model, which is closely related to the so-called swing equation, a fundamental tool in the analysis of power grid dynamics. 
Power grids are naturally modeled as networks of non-uniform, coupled oscillators with inertia, making the second-order Kuramoto model particularly suitable for capturing their behavior. By neglecting the first-order damping term present in standard formulations, our simplified version satisfies the Tonelli conditions for the Lagrangian, enabling a weak-KAM analysis of the system's dynamics. Moreover, we also take into account a generalized symmetric coupling $a_{ij}(t)$ which is a continuous and bounded function of time $t\in\mathbb{R}$, converging to a periodic function for $t\rightarrow\infty$.
For example, for a fixed $k\in\{1,\ldots,N\}$, we can consider the case 
\begin{equation*}
a_{ij}(t)=
    \begin{cases}
 e^{-\gamma t}\beta_{ij}(\omega t) & i=k \;\; \text{or} \;\; j=k,
 \vspace{.1cm}
 \\
 \beta_{ij}(\omega t) & i,j\neq k,
\end{cases}
\end{equation*}
where $\gamma>0$, $\omega\in\mathbb{R}^m$ ($m\leq N$) is a vector of rationally dependent frequencies and $\beta_{ij}(\omega t)$ is a bounded time-periodic function with period $T$.
Considering the equations of motion 
\begin{equation*}
    \ddot{\theta}_i=\Omega_i+\sum_{j=1}^N a_{ij}(t)\sin(\theta_j-\theta_i)\ ,
\end{equation*}
the potential can be written as
\begin{equation*}
   V(\theta,t)= -\langle \Omega,\theta\rangle-\frac{1}{2}\sum_{i,j=1}^N a_{ij}(t)\cos(\theta_j-\theta_i)\ .
\end{equation*}
Thus, the associated Lagrangian reads
\begin{equation}\label{Lagrangian}
    L(\theta,\dot\theta,t)=\frac{1}{2}\|\dot\theta\|^2+\langle \Omega,\theta\rangle+\frac{1}{2}\sum_{i,j=1}^N a_{ij}(t)\cos(\theta_j-\theta_i)
\end{equation}
and satisfies the Assumption 
\ref{TonelliAssumption}. Indeed, it is convex, superlinear w.r.t. $\dot\theta$ and the Euler-Lagrange flow solution of 
\begin{equation*}
    \ddot\theta_i = \Omega_i + \frac{1}{2}\sum_{j=1}^N a_{ij}(t)\cos(\theta_j-\theta_i)
\end{equation*}
is complete since the right-hand side is globally Lipschitz continuous w.r.t. the variable $\theta$. Moreover, the non-autonomous Lagrangian \eqref{Lagrangian} converges to
\begin{equation*}
    \overline{L_1}(\theta,\dot\theta,t)=\frac{1}{2}\|\dot\theta\|^2+\langle \Omega,\theta\rangle+\frac{1}{2}\sum_{\substack{i,j=1 \\ \scriptscriptstyle i,j\neq k}}^N a_{ij}(t)\cos(\theta_j-\theta_i)
\end{equation*}
in the sense of  \Cref{rate_Lagrangian}.
Note that, unlike \Cref{rate_Lagrangian}, here we are considering a generic period, but the previous analysis can be carried out in an analogous manner.
Next, we verify  such a condition: for a compact set $K\subset \mathbb{T}^N\times\mathbb{R}$, exploiting the symmetry of $a_{ij}(t)$ and the fact that the functions $\beta_{ij}(\omega t)$ being $T$-periodic functions we obtain 
\begin{equation*}
\begin{aligned}
    \| L&(\theta,\dot\theta,t+nT)-\overline{L_1}(\theta,\dot\theta,t)\|_{C^2(K)}=\bigg\|\sum_{i=1}^N a_{ik}(t)\cos(\theta_k-\theta_i)\bigg\|_{C^2(K)}\\
    &=\bigg\|\sum_{i=1}^N e^{-\gamma(t+nT)}\beta_{ik}(\omega t)\cos(\theta_k-\theta_i)\bigg\|_{C^2(K)}\\
    &=\sum_{|\alpha|\leq 2} \sup_{(\theta,t)\in K}\bigg|D^\alpha\bigg( \sum_{i=1}^N e^{-\gamma(t+nT)}\beta_{ik}(\omega t)\cos(\theta_k-\theta_i)\bigg)\bigg|\\
    &\leq \sum_{\alpha\leq 2} \sup_{t\in K}\bigg|\partial^\alpha_t\bigg( \sum_{i=1}^N e^{-\gamma(t+nT)}\beta_{ik}(\omega t)\bigg)\bigg|\\
    &\leq  \sup_{t\in K}\sum_{i=1}^Ne^{-\gamma(t+nT)}|\beta_{ik}(\omega t)|+\sup_{t\in K} \sum_{i=1}^N\left|\omega e^{-\gamma(t+nT)}\beta^\prime_{ik}(\omega t)-\gamma e^{-\gamma(t+nT)}\beta_{ik}(\omega t)\right|\\
    & \quad +\sup_{t\in K}\sum_{i=1}^N \bigg|\omega^2 e^{-\gamma(t+nT)}\beta^{\prime\prime}_{ik}(\omega t)-2\gamma\omega e^{-\gamma(t+nT)}\beta^\prime_{ik}(\omega t) +\gamma^2 e^{-\gamma(t+nT)}\beta_{ik}(\omega t)\bigg|\\
    &\leq e^{-\gamma nT} \sup_{t\in K}\sum_{i=1}^Ne^{-\gamma t}|\beta_{ik}(\omega t)|+e^{-\gamma nT}\sup_{t\in K} \sum_{i=1}^N\left(\omega e^{-\gamma t}|\beta^\prime_{ik}(\omega t)|+\gamma e^{-\gamma t}|\beta_{ik}(\omega t)|\right)\\
    & \quad +e^{-\gamma nT}\sup_{t\in K}\sum_{i=1}^N \bigg(\omega^2 e^{-\gamma t}|\beta^{\prime\prime}_{ik}(\omega t)|+2\gamma\omega e^{-\gamma t}|\beta^\prime_{ik}(\omega t)| +\gamma^2 e^{-\gamma t}|\beta_{ik}(\omega t)|\bigg)\\
    &\leq C e^{-\gamma nT}\ , \qquad \text{with} \; C\in \mathbb{R}\ ,
    \end{aligned}
\end{equation*}
where $\alpha$ (at the third line) denotes a multi-index and we used the boundedness of $\beta_{ik}$ and its derivative. Finally, in order to apply \Cref{thm_2_intro} we assume that the Aubry set associated with the limit time-periodic  Lagrangian consists of a unique hyperbolic periodic orbit. For instance, this happens when the Euler-Lagrange flow has a unique $T$-periodic solution and the associated eigenvalues of the monodromy matrix are all strictly smaller than 1.

We can now state the main result corresponding to \Cref{thm_2_intro} deducing the existence of an invariant torus for such a system.  

\begin{theorem}\label{thm_3}
    %Let $z:\mathbb{T}^N\times [0,\infty)\rightarrow\mathbb{R}$ be a solution of \eqref{Cauchy} for
    There exists a weak KAM invariant torus. Specifically, the following holds.
    \begin{itemize}
        \item[($i$)] Let $\varphi \in C(M; \R)$. Then, there exists a periodic weak KAM solution $w$ of the time-periodic system such that
        \begin{equation*}
            \lim_{n\to\infty} \| \mathcal{T}_{t+nT} \varphi(\theta)-w(\theta,[t])\|_{\infty}=0 \ ,
        \end{equation*}
        for $[t]=t$ mod $T$ and uniformly for $\theta\in\mathbb{T}^N$, where
        \begin{equation*}
            w(\theta,[t])=\overline{u}(\theta,[t])-\inf_{\theta\in\mathbb{T}^N}\overline{u}(\theta,[t])\ .
        \end{equation*}
        Moreover, uniformly w.r.t. the initial condition $z_0$ we have
        \begin{equation*}
            \| \mathcal{T}_{t+nT} \varphi(\theta)-w(\theta,[t])\|_{\infty}\leq C e^{-\gamma nT} \ , \quad \forall\, n\in \mathbb{N}\ ;
        \end{equation*}
        \item[($ii$)] Let $\varphi \in C(M; \R)$. Then, there exists an invariant torus given by the adherence of the graph of $d_\theta w$, in particular, 
        \[
        \displaystyle \lim_{n\to\infty}d_H(\overline{G}_n(d\mathcal{T}\varphi),\overline{G}(dw))=0,
        \]
        where
        \begin{equation*}
            G_n(d\mathcal{T}\varphi):=\Big\{\big(\theta,[t],d_\theta \mathcal{T}_{[t]+nT} \varphi(\theta),d_{t}\mathcal{T}_{[t]+nT} \varphi(\theta)\big): (\theta, [t]+nT) \in \mbox{Dom}(d\mathcal{T}\varphi) \Big\}
        \end{equation*}
        and $\overline{G}(dw)$ is the adherence of the periodic function $w$.
    \end{itemize}
\end{theorem}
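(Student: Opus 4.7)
The plan is to recognize \Cref{thm_3} as a direct specialization of \Cref{thm_2_intro} to the Kuramoto Lagrangian \eqref{Lagrangian} on the torus $M=\mathbb{T}^N$, so the proof reduces to verifying that all standing hypotheses of \Cref{thm_2_intro} are fulfilled by this model, up to the cosmetic replacement of the unit period by the period $T$. Concretely, I would organize the argument in three verification steps followed by a one-line invocation.

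First, I would verify that the Lagrangian $L$ in \eqref{Lagrangian} and its limit $\overline{L_1}$ are Tonelli in the sense of \Cref{TonelliAssumption}. Convexity in $\dot\theta$ and superlinearity are immediate from the quadratic kinetic term $\tfrac12\|\dot\theta\|^2$, since the potential $V(\theta,t)=-\langle\Omega,\theta\rangle-\tfrac12\sum_{i,j}a_{ij}(t)\cos(\theta_j-\theta_i)$ is bounded in $\theta$ uniformly in $t$ together with its derivatives (here one uses that $\beta_{ij}$ and $\beta_{ij}'$ are bounded and the exponential factor is bounded). Completeness follows because the right-hand side of the Euler--Lagrange equations is globally Lipschitz in $\theta$, as already noted in the body of the section; this gives a uniform bound on the extension time of maximal solutions. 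Both $L$ and $\overline{L_1}$ are of class $C^\infty$ in their arguments thanks to the smoothness of the trigonometric functions and of the prescribed coefficients.

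Second, I would invoke the explicit $C^2$ estimate already carried out in the excerpt to obtain
\[
\| L(\theta,\dot\theta,t+nT)-\overline{L_1}(\theta,\dot\theta,t)\|_{C^2(K)}\leq C\,e^{-\gamma nT},\qquad \forall\, n\in\mathbf{N},
\]
on every compact $K\subset T\mathbb{T}^N\times\R$. This is exactly the hypothesis \eqref{rate_Lagrangian} of \Cref{thm_2_intro} after rescaling time $t\mapsto t/T$ to convert $T$-periodicity into $1$-periodicity, with the exponential rate $\rho$ therein identified with $\gamma T$. The Aubry set assumption is taken as a standing assumption of \Cref{thm_3}: one assumes that the Aubry set of $\overline{L_1}$ is a unique hyperbolic periodic orbit, and the sufficient condition (a unique $T$-periodic Euler--Lagrange solution whose monodromy eigenvalues lie strictly inside the unit circle) is sketched in the excerpt.

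Third, with all hypotheses in place, parts ($i$) and ($ii$) follow by applying \Cref{thm_2_intro} verbatim to the Lagrangian \eqref{Lagrangian}. Part ($i$) yields the existence of a $T$-periodic viscosity solution $w$ of $\partial_t w+\overline{H_1}(t,\theta,d_\theta w)=0$ such that
\[
w(\theta,[t])=\overline{u}(\theta,[t])-\inf_{\theta\in\mathbb{T}^N}\overline{u}(\theta,[t]),
\]
together with the exponential bound $\|\mathcal{T}_{t+nT}\varphi-w(\cdot,[t])\|_\infty\leq C e^{-\gamma nT}$ guaranteed by \eqref{exp_1}; note that the rate $\gamma$ here replaces the $\rho$ of \eqref{exp_1} because of the aforementioned rescaling, and the uniformity in the initial datum is inherited from the fact that the constant in \eqref{exp_1} depends only on the $C^2$ bound of the Lagrangians and on the diameter of $M$. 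Part ($ii$), the convergence in Hausdorff distance of the adherences of the graphs of $d\mathcal{T}_{t+nT}\varphi$ to the adherence of the graph of $dw$, is then exactly the second half of \Cref{thm_2_intro}; the invariance of $\overline{G}(dw)$ under the Hamiltonian flow associated with $\overline{H_1}$ (hence the "invariant torus" terminology) follows from the weak KAM characterization of $w$ together with the standard graph property of differentials of weak KAM solutions along calibrated curves, as recalled after \eqref{calibrated}. No genuine obstacle is expected beyond the careful bookkeeping of the period $T$ in place of $1$, which is purely notational.
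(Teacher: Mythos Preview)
Your proposal is correct and mirrors the paper's own treatment: the paper does not give a separate proof of \Cref{thm_3} but obtains it as a direct specialization of \Cref{thm_2_intro} after verifying the Tonelli conditions, the exponential $C^2$-convergence estimate, and recording the Aubry set hypothesis. Your three verification steps and concluding invocation are exactly this scheme, with the expected replacement of the unit period by $T$.
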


\end{document}